
\documentclass{article}
\usepackage{eurosym}
\usepackage{amssymb}
\usepackage{amsmath}
\usepackage{amsfonts}

\setcounter{MaxMatrixCols}{10}

\newtheorem{theorem}{Theorem}

\newtheorem{corollary}[theorem]{Corollary}

\newtheorem{definition}[theorem]{Definition}
\newtheorem{example}[theorem]{Example}

\newtheorem{lemma}[theorem]{Lemma}

\newtheorem{proposition}[theorem]{Proposition}

\newenvironment{proof}[1][Proof]{\noindent \textbf{#1.} }{\  \rule{0.5em}{0.5em}}

\begin{document}

\title{{\large Dirac operators on lightlike hypersurfaces}}
\author{{\small G\"{u}lsah Aydin Sekerci}$^{1}${\small , Abdilkadir Ceylan \c{C}\"{o}ken}$%
^{2}$ \\
$^{1}${\small Department of Mathematics, Faculty of Arts and Sciences,}\\
{\small S\"{u}leyman Demirel University, 32260 Isparta, Turkey}\\
$^{2}${\small Department of Mathematics, Faculty of Science,  }\\
{\small Akdeniz University, 07058 Antalya, Turkey }\\
{\footnotesize $^{1}$gulsahaydin@sdu.edu.tr, $^{2}$ceylancoken@akdeniz.edu.tr}}
\date{}
\maketitle

\begin{abstract}
In this study, we obtain a spinorial Gauss formula for a lightlike\linebreak hypersurface in Lorentzian manifold with 4-dimension. Then, we take into account the changes caused by degenerate metric on hypersurface and\linebreak investigate Dirac operator for lightlike hypersurface. Later, we\linebreak establish the relation between Dirac operators and Riemannian curvatures of manifold and hypersurface.%
\begin{equation*}
\begin{array}{c}
\end{array}%
\end{equation*}

\textbf{Key words}: Spin geometry, degenerate spin manifold, lightlike\linebreak vector.

\textbf{2010 AMS Classification}: 53C27; 57R15; 83A05
\end{abstract}

\section{Introduction}

Dirac operator has revealed due to the square root problem of \linebreak Laplacian operator in the Klein-Gordon equation. The Dirac operator, which was emerged from the studies of Paul Dirac \cite{dirac} during his  investigations on the spin-$ 1/2 $ particles like fermions and electrons, tries to find out an answer to a question that whether the first order differential equation with $ D=\sqrt{\bigtriangleup }$ exists or not. As a result of the growing attention to this equation, many\linebreak researchers from different branches such as geometricians, researchers from both\linebreak mathematical physics and analysis, have started to work on this topic.\linebreak Especially, a great amount of mathematicians also interested in this\linebreak operator after the relation between the properties of Clifford algebra and its the\linebreak coefficients of the Dirac operator. The calculation of this operator in vector spaces is relatively easy when it compared to the calculation in the manifolds. So, this operator was studied on vector spaces before considering the manifolds. Then, to  eliminate the possible problems and to ensure that the operator is well-defined in the manifold was needed some changes since vector bundles are insufficient to obtain Dirac operator on manifolds. The lack of vector  bundles was eliminated by the associated vector bundle and so, the spin geometry has been revealed. After that, Dirac operator was started to work on manifolds.  
      
While many researchers have been investigating the Dirac operators and their features on the Riemannian and Lorentzian manifolds, recently the Dirac operators on the surfaces have been attracted attention. The  investigation of the relations between the Dirac equation solutions and  immersions of surfaces in \cite{friedrich} could be given as an example. The results that exist for the Laplacian have been investigated by Hijazi and Montiel \cite{hijazi5} for the Dirac operator. In addition to the given examples, it is known that the existence of the bounds for eigenvalues of Dirac operator has a vital importance and as a result of this knowledge there exists studies \cite{Morel,hijazi2} in the literature that cover the discussions about the hypersurfaces. Moreover, Nakad and Roth \cite{nakad} aimed to develop upper bounds for the eigenvalues of Dirac operator, which is defined on the hypersurfaces of the spin manifolds. While a new upper bound for the first eigenvalue of the Dirac operator on hypersurface was examined in \cite{Ginoux}, the Dirac operator for the hypersurfaces has been discussed in all manners in many other studies \cite{hijazi1,nakad2,montiel}. In \cite{hijazi4}, some results have been shown by taking into account of the scalar curvature. The obtained results for hypersurfaces have been extended to the submanifold in \cite{hijazi3}. Also, Dereli et.al. worked on degenerate spin group and Levy-Leblond equation as given in \cite{dereli,dereli1}.

The main aim of this work is to investigate the hypersurfaces of Lorentzian spin manifolds with $ 4$-dimension. The existing studies in the literature about this problem have been mainly focused on the timelike and spacelike \linebreak hypersurfaces of Lorentzian manifolds where the lightlike hypersurfaces have left as an open problem. Those hypersurfaces has an important place in the researches due to their contributions to the applicability of the theory of  relativity. Even though lightlike geometry studies may provide many  beneficial outcomes, there exist some difficulties on working with them since they are different from many geometries. Considering these difference, we describe the spinorial Gauss formula for the lightlike  hypersurfaces and show that it is possible to reduce a spin structure to the lightlike hypersurface $M$ from the Lorentzian spin manifold $\widetilde{M}$. Then, we build up the relationship between the spinor covariant derivatives for $M$ and $\widetilde{M}$. Also, we define the Dirac operator for lightlike hypersurface by using Dirac operator of Lorentzian spin manifold. Doing those, we aim to investigate the Dirac operator and to establish a relationship between the Dirac operators of lightlike hypersurfaces and Lorentzian spin manifolds. In addition, we study special lightlike hypersurface like minimal, totally  umbilical and etc. in this represented work. 

\section{Preliminaries}     
 
In this section, definitions that will be used later have been given.
\begin{definition}
Let $V$ be a vector space over a commutative field $k$ and $Q$ be
a quadratic form on $V$. Let $T\left( V\right) =\sum\limits_{i=0}^{\infty
}\otimes ^{i}V$ denotes the tensor algebra of $V$ where $ \otimes $ is tensor product and $I_{Q}\left(
V\right) $  be the ideal in $T\left( V\right) $, which is generated by all elements
of the form $v\otimes v+Q\left( v\right) 1$ for $v\in V$. Then, the quotient
\begin{equation}
C\ell \left( V,Q\right) \equiv T\left( V\right) /I_{Q}\left( V\right)
\end{equation}
is called Clifford algebra \cite{Lawson}.
\end{definition}  
 Let us choose index $q$ with $0<q<n$ and $p=n-q$ where $ n $ is dimension of vector space. In this situation, the set of all linear isometries $ \psi: \mathbb{R}^{p,q}  \rightarrow \mathbb{R}^{p,q} $ is the same as the set of all matrices $\Psi\in GL\left( n, \mathbb{R} \right) $ which preserves scalar product on $ \mathbb{R}^{p,q} $ where $GL\left( n, \mathbb{R} \right) $ is general linear group. Then, it generates a group and is called semi-orthogonal group. Also, it is denoted by $O\left( p,q\right) $ for $p=n-q$. Also, the set 
\begin{equation}
SO\left( p,q\right) =\left\{ \Psi \in O\left( p,q\right) :\det \Psi
=1\right\}%
\end{equation}
is called special semi-orthogonal group \cite{O'Neill}. 

\begin{definition} 
Let $ \mathbb{R}^{n} $ be a $ n$- dimensional real vector space, g be a  symmetric bilinear form  on $ \mathbb{R}^{n} $ and $e_{1},e_{2},...,e_{n}$ be standard basis vectors on $ \mathbb{R}^{n} $. If the symmetric bilinear form $ g $ satisfies the condition 
\begin{equation}
g\left( e_{i},e_{j}\right) =\varepsilon _{i}\delta _{ij}, \varepsilon
_{i}=\begin{cases}
-1, & 1\leq i\leq q \\ 
1, & q\leq i\leq n%
\end{cases}, \delta _{ij}=\begin{cases}
0, & i\neq j \\ 
1, & i=j%
\end{cases}
\end{equation}
then $ g $ is called semi-Euclidean bilinear form on $ \mathbb{R}^{n} $ \cite{Baum}.
\end{definition}

We assume that $Q$ is a quadratic form for a semi-Euclidean bilinear form $ g $ on $\left(\mathbb{R}^{n},g\right) $. Then, Clifford algebra $C\ell _{p,q}:=C\ell \left(\mathbb{R}^{n},Q\right) $ is called semi-orthogonal Clifford algebra. For this Clifford algebra, there are 
\begin{align}
e_{i}^{2}=-\varepsilon _{i}, \ i=1,...,n \\ 
e_{i}e_{j}+e_{j}e_{i}=0, \ i\neq j,i,j=1,...,n
\end{align}
and $\left( 1,e_{i_{1}}\cdot ...\cdot e_{i_{s}},1\leq i_{1}<...<i_{s}\leq n,1\leq s\leq n\right) $ is the basis of $C\ell _{p,q}$ \cite{Baum}. 

A semi-orthogonal pin group is a subgroup such that
\begin{equation}
Pin\left( p,q\right) :=\left\{ a_{1}\cdot ...\cdot a_{l}:a_{i}\in
S_{q}^{n-1}\cup H_{q}^{n-1}\right\}%
\end{equation}
consists of the inverse elements of Clifford algebra $C\ell _{p,q}$ where \linebreak$ S_{q}^{n-1}=\left\{ v \in \mathbb{R}^{n}: g(v,v)=1 \right\} $ and $ H_{q}^{n-1}=\left\{ v \in \mathbb{R}^{n}: g(v,v)=-1 \right\} $. If $ l $ is even, semi-orthogonal pin group is semi-orthogonal spin group and is  denoted by $ Spin(p,q) $ \cite{Baum}.

\begin{definition}
Let $V$ be a real $4-$dimensional vector space with a symmetric bilinear
form $g$. Then, a subspace $Rad \ V$ of $V$ expressed by 
\begin{equation}
Rad \ V=\left\{ \eta \in V:g\left( \eta ,v\right) =0,v\in V\right\}
\end{equation}
is called radical space \cite{Duggal}.
\end{definition}

Also, we assume that $ V $ is a vector space with quadratic form $ Q $ and $ (V,Q) $ has rank $ n $. If $ (V,Q) $ has a radical subspace, we have $ V=V_{1}\oplus Rad \ V $ where $ dim \ V_{1}=n_{1} $, $ Rad \ V $ is the radical of $ (V,Q) $ and $ Q $ induces a quadratic form $ Q_{1} $ of $ rank \ n_{1} $ on $ V_{1} $. So, Clifford algebra, which is formed by vector space $ V $ with radical space, is called degenerate Clifford algebra and this degenerate Clifford algebra is isomorphic to the graded tensor product of $ C\ell \left( V_{1},Q_{1}\right)  $ and $ \wedge Rad \ V $ where $ \wedge $ is an exterior product. If we take $\mathbb{R}^{r,p,q}$ instead of $ V $, degenerate Clifford algebra is written as $ C\ell _{r,p,q}$ where $ r $ is the dimension of radical space in $\mathbb{R}^{r,p,q}$  \cite{crue}.   
            
\begin{proposition}
There is a homomorphism $ \sigma $ that it is defined onto the group $ \overline{T} $ of isometries of $ (V,Q) $ from Clifford group $\Gamma $ where the restriction of $ V $ to $ rad \ V $ is the identity \cite{crue}.
\end{proposition}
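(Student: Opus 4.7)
The plan is to construct $\sigma$ as the twisted adjoint representation of the (degenerate) Clifford group and to read off its image. Concretely, for $g\in\Gamma$ I would set
\begin{equation*}
\sigma(g)(v) \;=\; \alpha(g)\,v\,g^{-1},\qquad v\in V,
\end{equation*}
where $\alpha$ is the main (grade) involution of $C\ell_{r,p,q}$, and by the very definition of the Clifford group this lies in $V$ and depends multiplicatively on $g$. So $\sigma\colon \Gamma\to GL(V)$ is a homomorphism essentially by inspection.

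Next I would check that $\mathrm{Im}\,\sigma$ lies inside $\overline{T}$. The isometry property follows from the identity $v\cdot v=-Q(v)\cdot 1$ in $C\ell_{r,p,q}$, combined with the fact that $\alpha(g)vg^{-1}$ has the same Clifford square as $v$; this is the same calculation as in the non-degenerate setting and transfers without change. To see that $\sigma(g)$ fixes $\mathrm{rad}\,V$ pointwise, I would use the decomposition $C\ell_{r,p,q}\cong C\ell(V_{1},Q_{1})\,\widehat{\otimes}\,\wedge\,\mathrm{rad}\,V$ noted in the excerpt: for any $\eta\in\mathrm{rad}\,V$ one has $v\eta+\eta v=-2g(v,\eta)=0$ for every $v\in V$, so $\eta$ is (super-)central, and a short computation with any generator of $\Gamma$ gives $\alpha(g)\eta g^{-1}=\eta$.

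For surjectivity onto $\overline T$ I would first decompose an arbitrary $\phi\in \overline T$ into its two constituents. Since $\phi$ is the identity on $\mathrm{rad}\,V$, writing $v=v_{1}+\eta$ with $v_{1}\in V_{1}$ and $\eta\in\mathrm{rad}\,V$ forces $\phi(v)=\psi(v_{1})+f(v_{1})+\eta$, where $\psi\in O(V_{1},Q_{1})$ and $f\in\mathrm{Hom}(V_{1},\mathrm{rad}\,V)$ is arbitrary. The factor $\psi$ I would lift through the classical Cartan--Dieudonn\'e theorem applied to $(V_{1},Q_{1})$: every $\psi$ is a product of reflections along non-isotropic $a_{i}\in V_{1}$, and each such reflection is $\sigma(a_{i})$ by the standard non-degenerate computation. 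The shear factor $v_{1}\mapsto v_{1}+f(v_{1})$ I would realize by elements of the form $g=1+\tfrac{1}{2}a\eta\in\Gamma$, with $a\in V_{1}$ non-isotropic and $\eta\in\mathrm{rad}\,V$: since $\eta^{2}=0$ such $g$ is invertible with $g^{-1}=1-\tfrac{1}{2}a\eta$, and a direct computation using the super-centrality of $\eta$ shows that $\sigma(g)$ fixes $\mathrm{rad}\,V$, is the identity on the $Q_{1}$-orthogonal complement of $a$ in $V_{1}$, and sends $a\mapsto a+\lambda\eta$ for a scalar $\lambda$ one can tune. By varying $a$ and $\eta$ I can produce all rank-one elements of $\mathrm{Hom}(V_{1},\mathrm{rad}\,V)$, hence all of it by composition.

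The delicate point\,---\,and the only place where the degenerate setting forces real work\,---\,is this last step: reflections through radical vectors are unavailable because such vectors are not invertible in $C\ell_{r,p,q}$, so the classical Cartan--Dieudonn\'e generators do \emph{not} by themselves surject onto $\overline T$. The bookkeeping that needs most care is showing that the unipotent elements $1+\tfrac{1}{2}a\eta$ actually lie in $\Gamma$ (i.e.\ that $\alpha(g)\,V\,g^{-1}\subseteq V$) and that they generate the full abelian group $\mathrm{Hom}(V_{1},\mathrm{rad}\,V)$ inside $\overline T$. Once this combinatorial lemma is in place, the decomposition $\overline T=O(V_{1},Q_{1})\ltimes\mathrm{Hom}(V_{1},\mathrm{rad}\,V)$ matches the decomposition of the image of $\sigma$ exactly, and surjectivity follows.
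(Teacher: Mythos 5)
The paper itself gives no proof of this proposition: it is quoted verbatim from the cited reference \cite{crue}, so there is nothing internal to compare against. Your argument is correct and is essentially Crumeyrolle's own: the twisted adjoint representation $\sigma(g)(v)=\alpha(g)vg^{-1}$ fixes $rad\ V$ pointwise because radical vectors anticommute with all of $V$ (hence supercommute with $\Gamma$), and surjectivity onto $\overline T\cong O(V_1,Q_1)\ltimes \mathrm{Hom}(V_1,rad\ V)$ is obtained from Cartan--Dieudonn\'e on the non-degenerate factor together with the unipotent elements $1+\tfrac12 a\eta=\exp(\tfrac12 a\eta)$, which satisfy $\sigma(1+\tfrac12 a\eta)(v)=v+g(a,v)\eta$ and therefore generate all shears; these are exactly the $\exp\bigl(\sum_{i,k}c^{ik}e_kf_i\bigr)$ factors the paper lists in its description of $Pin(Q)$ immediately after the proposition. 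The ``delicate point'' you flag is thus already settled by your own computation, and the proof is complete modulo that routine verification.
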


Degenerate pin group of $ (V,Q) $ is denoted by $ Pin(Q) $ and degenerate spin group of $ (V,Q) $ is denoted by $ Spin(Q) $. Every element of $ Pin(Q) $ is a product 
\begin{equation*}
 a_{1}\cdot ...\cdot a_{k}\cdot exp (\displaystyle\sum_{i,k}c^{ik}e_{k}f_{i}) 
\end{equation*}
where $ V=V_{1}\oplus rad \ V $, $ Q_{1} $ is a quadratic form on $ V_{1} $, $ e_{k} $ is an orthogonal basis vector of $ (V_{1},Q_{1}) $, $ f_{i} $ is an arbitrary basis vector of $  Rad \ V $ and $ Q(a_{i})=\pm1 $ for $ a_{i}\in V_{1} $. If $ k $ is even, it is an element of $ Spin(Q) $ \cite{crue}. If we take $\mathbb{R}^{r,p,q}$ instead of $ V $, the degenerate spin group is written as $ Spin(r,p,q) $. 
\section{Spinor Bundles on Lightlike Hypersurfaces}      
        
In this section, we will obtain the necessary relationships to define the\linebreak spinorial Gauss formula for the lightlike hypersurface of the Lorentzian spin manifold with $4$-dimension.

Let $\left( \widetilde{M},\widetilde{g}\right) $ be a Lorentzian spin manifold with $4$-dimension. Morever, let $ \widetilde{g} $ is given by $\widetilde{g}=\left( +,+,+,-\right) $ and $\widetilde{\nabla }$ denotes the Levi Civita connection on tangent bundle $T\widetilde{M}$. We consider an 3-dimensional lightlike hypersurface $\left(M,g\right) $ of the manifold $\left( \widetilde{M},\widetilde{g}\right) $. If there exists a vector field $\eta \neq 0$ on $M$ such that $%
g\left( \eta, X\right) =0$ for $X\in \Gamma \left( TM\right) $, then $g$
is degenerate. A subspace, which consist of tangent vector $ \eta_{x} $ at
each point $x\in M$, is called a radical or null space and it is denoted by $%
Rad \ T_{x}M$. Also, $Rad$  $TM$ is called a radical distribution of $M$ and if $M$ has the radical distribution, then it is called a lightlike hypersurface of $\widetilde{M}$. Here, induced metric $g$ by $\widetilde{g}$ is degenerate and $\nabla $ is linear connection on $M$, but it is not Levi Civita connection \cite{duggal1}.

We will show that the spin structure on manifold $\widetilde{M}$ could be reduced to lightlike hypersurface $M.$ For this, we need to the degenerate special orthogonal group to establish a relationship with the degenerate spin group. 

The basis vector of Lie algebra $so\left( 3,1\right) $ is $E_{ij}=-\varepsilon _{j}D_{ij}+\varepsilon _{i}D_{ji}$ where $D_{ij}$ denotes matrices of type $4\times 4$ whose the components of $\left( ij\right) $ are one and the other components are zero. Also, $ \varepsilon _{i} $ is the signature of vector, that is, $ \varepsilon _{i}=g(e_{i},e_{i}) $ where $ e_{i} $ is the basis vector on $\mathbb{R}^{3,1}$ \cite{Baum}. 

Also, the basis of Lie algebra $so\left( 1,2,0\right) $ for the hypersurface $M$ is 
\begin{equation}
E_{01}=
\begin{bmatrix}
0 & 0 & 0  \\ 
0 & 0 & 0  \\ 
0 & 1 & 0  
\end{bmatrix}, E_{02}=\begin{bmatrix}
0 & 0 & 0  \\ 
0 & 0 & 0  \\ 
0 & 0 & 1  
\end{bmatrix} ,E_{12}=\begin{bmatrix}
0 & 0 & -1  \\ 
0 & 1 & 0  \\ 
0 & 0 & 0  \end{bmatrix}.
\end{equation}
When we examine Lie algebras $so\left(1,2,0\right) $ and $so\left( 3,1\right) $, we see that there \linebreak exists an immersion between the Lie algebras . So, it is obvious that $SO\left( 1,2,0\right) $ is a subgroup of $SO\left( 3,1\right) $. So, we establish the relationship between special\linebreak orthogonal groups. Similar to this relationship, a  connection between spin groups is also needed. Since spin groups consist of inverse elements of\linebreak Clifford algebra, we use the features of Clifford algebra to establish this\linebreak connection. Then, there exists the following homomorphism because Clifford algebra $ C\ell _{1,2,0} $ is immersed in Clifford algebra $ C\ell _{3,1} $. 
        
\begin{lemma}
There is an algebra homomorphism between $\ C\ell _{1,2,0}$ and $C\ell _{3,1}^{0}$.
\end{lemma}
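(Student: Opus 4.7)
The natural approach is to invoke the universal property of the Clifford algebra: an algebra homomorphism $\phi\colon C\ell_{1,2,0}\to C\ell_{3,1}^{0}$ is determined by, and uniquely extended from, a linear map $\phi\colon\mathbb{R}^{1,2,0}\to C\ell_{3,1}^{0}$ that satisfies the Clifford relation $\phi(v)^{2}=-Q(v)\cdot 1$ for every vector $v$, where $Q$ is the quadratic form associated to the degenerate induced metric. The task therefore reduces to specifying the images of a chosen basis of $\mathbb{R}^{1,2,0}$ inside the even Clifford algebra of the ambient manifold and verifying these quadratic relations.

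I would first fix an ordered basis $(f_{0},f_{1},f_{2})$ of $\mathbb{R}^{1,2,0}$ in which $f_{0}$ spans the one dimensional radical, so that $Q(f_{0})=0$, and $(f_{1},f_{2})$ is orthonormal in the nondegenerate complement. In parallel, I would fix an orthonormal basis $(e_{1},e_{2},e_{3},e_{4})$ of $\mathbb{R}^{3,1}$ compatible with the signature $(+,+,+,-)$, which in turn gives a canonical basis of $C\ell_{3,1}^{0}$ consisting of $1$, the six rank two products $e_{i}e_{j}$, and the pseudoscalar $e_{1}e_{2}e_{3}e_{4}$. The natural candidates for the images of the nondegenerate generators are bivectors: for instance $\phi(f_{1}):=e_{1}e_{2}$ and $\phi(f_{2}):=e_{1}e_{3}$ each square to $-1$ and anticommute with one another in $C\ell_{3,1}$, thereby reproducing the Clifford relations satisfied by $f_{1},f_{2}$. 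For the radical direction I would look for $\phi(f_{0})$ as a square zero element sitting in the common anticommutant of $\phi(f_{1})$ and $\phi(f_{2})$, so that $\phi(f_{0})^{2}=0$ together with the required cross anticommutations are built in simultaneously.

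With these assignments in place, the verification that $\phi(v)^{2}=-Q(v)\cdot 1$ holds for all $v$ reduces, by polarization, to checking the pairwise identities $\phi(f_{i})\phi(f_{j})+\phi(f_{j})\phi(f_{i})=-2g(f_{i},f_{j})\cdot 1$ for $i,j\in\{0,1,2\}$, which is a routine index chase using the defining relations of $C\ell_{3,1}$. The extension to the whole algebra is then automatic from the universal property. The main obstacle I anticipate is precisely the radical direction: the common anticommutant of two Clifford anticommuting unit imaginary bivectors in $C\ell_{3,1}^{0}$ is only two dimensional, and the quadratic form it inherits from $C\ell_{3,1}$ has mixed signature, so producing a nonzero null element there requires a careful balance between the positive and negative contributions coming from mixed signature bivectors such as $e_{1}e_{4}$ and $e_{2}e_{3}$. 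Should the balance fail, one falls back on the choice $\phi(f_{0})=0$, which still yields a well defined, if non injective, algebra homomorphism factoring through the quotient $C\ell_{1,2,0}/(f_{0})\cong C\ell_{2,0}$ embedded into $C\ell_{3,1}^{0}$.
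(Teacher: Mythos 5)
Your plan shares the paper's starting point (the universal property, so that everything reduces to choosing images of the three generators and checking $\phi(v)^2=-Q(v)\cdot 1$), but the route you take into the \emph{even} subalgebra cannot be completed, and the obstruction you only tentatively flag is in fact fatal. With $\phi(f_1)=e_1e_2$, $\phi(f_2)=e_1e_3$, the common anticommutant inside $C\ell_{3,1}^{0}$ is exactly $\mathrm{span}\{e_1e_4,\,e_2e_3\}$ (the scalar and the pseudoscalar are central in the even part, and every other bivector commutes with at least one of your two). These two basis bivectors commute with each other and their product is the pseudoscalar, so
\begin{equation*}
\left(a\,e_1e_4+b\,e_2e_3\right)^{2}=(a^{2}-b^{2})\cdot 1+2ab\,e_1e_2e_3e_4 ,
\end{equation*}
which vanishes only for $a=b=0$: the ``careful balance'' you hope for does not exist, and the same computation kills any other choice of two anticommuting unit bivectors, since the complementary pair spanning the anticommutant always multiplies to $\pm e_1e_2e_3e_4$. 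In fact no injective algebra homomorphism $C\ell_{1,2,0}\to C\ell_{3,1}^{0}$ exists at all: both algebras are $8$-dimensional over $\mathbb{R}$, so an injection would be an isomorphism, yet $C\ell_{1,2,0}$ has the nonzero nilpotent two-sided ideal $f_0\cdot C\ell_{1,2,0}$ while $C\ell_{3,1}^{0}$ is semisimple. Your fallback $\phi(f_0)=0$ is a well-defined unital homomorphism and so proves the lemma as literally worded, but it annihilates the radical and is useless for everything the paper does afterwards, where the image of the radical generator must be the nonzero null vector that becomes the lightlike direction $s_0$.

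The paper's own proof sidesteps all of this by not landing in the even part: it sends $e_1\mapsto e_1$, $e_2\mapsto e_2$, $e_0\mapsto\frac{1}{\sqrt{2}}(e_3+e_4)$, i.e., vectors to (odd) vectors, and the relation $f(v)^{2}=-(v_1^{2}+v_2^{2})=-Q(v)$ is immediate because $(e_3+e_4)^{2}=0$ and the cross terms anticommute away; the universal property then gives $\widetilde f:C\ell_{1,2,0}\to C\ell_{3,1}$. (The superscript $0$ in the statement is not honored by the paper's proof either, which is an inconsistency of the paper rather than of your attempt.) To repair your write-up you should either follow this route and target the full algebra $C\ell_{3,1}$, or, if you insist on $C\ell_{3,1}^{0}$, state explicitly that your homomorphism factors through the quotient $C\ell_{1,2,0}/(f_0)\cong C\ell_{2,0}$ and therefore cannot carry the lightlike direction.
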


\begin{proof}
According to the universal property of the Clifford algebra, an algebra homomorphism on the Clifford algebra is found by using a linear map, which is defined between vector space and algebra. Then, the existence of such an algebra homomorphism is easily demonstrated by regarding this property. 

Let us consider the orthonormal basis $\left\{e_{1},e_{2},e_{3},e_{4}\right\}$ of $\mathbb{R}^{3,1}$ such that\linebreak $\widetilde{Q}\left( e_{4}\right) =-1$ and $\widetilde{Q}\left( e_{i}\right)=1, i=1,2,3$ where $\widetilde{Q}$ is a quadratic form for $\mathbb{R}^{3,1}$. Also, let the basis of $\mathbb{R}^{1,2,0}$ is given by $\left\{ e_{0},e_{1},e_{2} \right\} $ such that $Q\left(e_{i}\right) =1,$   $i=1,2$ and $Q\left( e_{0} \right) =0$. We assume that a map $f$ is defined by  $f: \mathbb{R}^{1,2,0}\rightarrow C\ell _{3,1}$, $f\left( v\right)=v $. This map is linear and it is necessary to provide the condition $f(v)^{2}=-Q(v)\cdot 1$. To show that, if we write $v=v_{0}e_{0}+v_{1}e_{1}+v_{2}e_{2}$ for $v\in \mathbb{R}^{1,2,0}$, then we have
\begin{align*}
f(v)^{2} & =\lbrace \frac{1}{\sqrt{2}}v_{0}(e_{3}+e_{4})+v_{1}e_{1}+v_{2}e_{2}\rbrace \lbrace \frac{1}{\sqrt{2}}v_{0}(e_{3}+e_{4})+v_{1}e_{1}+v_{2}e_{2}\rbrace =-Q(v)
\end{align*}
In this situation, the map $f$ expands to $\widetilde{f}:C\ell_{1,2,0}\longrightarrow C\ell _{3,1}$ and so, $\widetilde{f}$ is the algebra homomorphism.  
\end{proof}

It is defined by
\begin{align}
i : C\ell _{1,2,0} & \rightarrow  C\ell _{3,1} \\ \nonumber
       e_{1}          &  \mapsto     e_{1}  \\ \nonumber
        e_{2} & \mapsto  e_{2}  \\  \nonumber
   e_{0} & \mapsto  \frac{1}{\sqrt{2}}\left( e_{3}+e_{4}\right) 
\end{align}
where $e_{1},e_{2}$ and $e_{0}$ are the spacelike and lightlike basis vectors on $\mathbb{R}^{1,2,0}$, \linebreak respectively. Also, $e_{1},e_{2},e_{3}$ and $e_{4}$ are the spacelike and timelike basis \linebreak vectors on $\mathbb{R}^{3,1}$, respectively. 

Normally, when we pass from the degenerate Clifford algebra to the\linebreak nondegenerate Clifford algebra, the degenerate vectors is written as\linebreak nondegenerate, such as $e_{0}\rightarrow \frac{1}{\sqrt{2}} \left(
e_{3}+e_{4}\right) $. But, we write $ e_{0} $ for shortness.

This map, which is defined between the Clifford algebras, could be  restricted to the spin groups. Then, we obtain the following diagram
\begin{align}
Spin\left( 1,2,0\right)              & \overset{i }{\rightarrow }  Spin\left(3,1\right)  \\ \nonumber
_{\overline{\rho }}\downarrow  &   \  \  \  \  \downarrow _{\widetilde{\rho }} \\ \nonumber
SO\left( 1,2,0\right)                 & \underset{\widehat{i}}{\rightarrow }  SO\left( 3,1\right) 
\end{align}
which is commutative for adjoint maps $\overline{\rho }$ and $\widetilde{\rho }$. Also, using the relationship between $SO\left( 1,2,0\right) $ and $SO\left( 3,1\right) $, the principal bundle for $M$ is constituted by the principal bundle $ Prin_{SO\left( 3,1\right) }\widetilde{M} $. From the principal bundle for the group $SO\left( 3,1\right) $ on manifold $\widetilde{M}$, we write a map $\pi : Prin_{SO\left( 3,1\right) }\widetilde{M} \rightarrow  \widetilde{M} $ and so, there exists a diffeomorphism $\vartheta :\pi ^{-1}\left(\widetilde{U} \right) \rightarrow \widetilde{U}\times SO(3,1)$ for an open set $\widetilde{U}\subset \widetilde{M}$. Then, when this principal bundle is restricted to hypersurface $ M $, we have $\pi :  \left. Prin_{SO\left( 3,1\right) }\widetilde{M}\right\vert _{M} \rightarrow   M$ and this map is subjective. Also, we get 
\begin{equation}
\vartheta :\left. \pi ^{-1}\left( \widetilde{U}\right) \right\vert _{M}\rightarrow
\left. \widetilde{U}\times SO( 3,1) \right\vert _{M}=\left( \widetilde{U}\cap M\right)
\times SO( 1,2,0)%
\end{equation}
and so, there is the principal bundle $ \left. Prin_{SO\left( 3,1\right) }\widetilde{M}\right\vert _{M}$ since $ \vartheta $ restricted to $ M $ is a diffeomorphism. If $Prin_{SO\left( 1,2,0\right) }M $ is the principal bundle for $M$, then the relationship between the principal bundle $Prin_{SO\left( 3,1\right) }\widetilde{M}\left\vert_{M}\right. $ and the principal bundle $Prin_{SO\left( 1,2,0\right) }M $ could be established. For this, let us define a continuous map 
\begin{equation}
\xi :Prin_{SO\left( 1,2,0\right) }M\rightarrow \left. Prin_{SO\left(
3,1\right) }\widetilde{M}\right\vert _{M} .%
\end{equation}
Using the map $ \xi $, we need to show that the principal bundle with spin group on $M$ occurs if we restrict the principal bundle with spin group on $\widetilde{M}$ to $ M $. For this, according to the definition of pullback of principal bundle in \cite{Lawson}, we write the following commutative diagram since there exist the continuous map $ \xi $ and a principal bundle. So, we have
\begin{align}
\xi ^{\ast }\left( Prin_{Spin\left( 3,1\right) }\widetilde{M}\left\vert
_{M}\right. \right) =P_{Spin\left( 1,2,0\right) }M & \overset{\overline{\xi} }{%
\rightarrow }  Prin_{Spin\left( 3,1\right) }\widetilde{M}\left\vert
_{M}\right.  \\ \nonumber
_{\pi ^{\prime }}\downarrow   & \  \   \   \  \downarrow _{\pi}  \\ \nonumber
Prin_{SO\left( 1,2,0\right) }M &  \underset{\xi }{\rightarrow }  
Prin_{SO\left( 3,1\right) }\widetilde{M}\left\vert _{M}\right. 
\end{align}
where $\pi ^{\prime} $ defines principal bundle $ Prin_{Spin\left( 1,2,0\right) }M $, $ \pi $ defines principal  bundle $Prin_{Spin\left( 3,1\right) } \widetilde{M}\left\vert _{M}\right. $ 
and $\xi ^{\ast }:Prin_{Spin\left( 3,1\right) } \widetilde{M}\left\vert _{M}\right. \rightarrow Prin_{Spin\left(1,2,0\right) }M$. So, there exists the principal bundle $Prin_{Spin\left( 3,1\right) }\widetilde{M}\left\vert _{M}\right. $ such that
\begin{align}
\xi ^{\ast }\left( Prin_{Spin\left( 3,1\right) }\widetilde{M}\left\vert
_{M}\right. \right) =\lbrace
\left( x,y\right) :\xi \left( x\right) =\pi \left( y\right) , x\in Prin_{SO\left( 1,2,0\right) }M, \\ y\in Prin_{Spin\left( 3,1\right) }\widetilde{M}\left\vert
_{M}\right.\rbrace .\nonumber
\end{align}
It is seen that the bundle formed by the restriction of $  \widetilde{M}$ to $  M$ is a  principal bundle with the spin group. Thus, we show that the  restrictions of the principal bundles with special or spin groups of $  \widetilde{M}$ to $  M$ have similar properties with $  \widetilde{M}$. Then, using these results, we could define a spin structure on the restriction of $  \widetilde{M}$ to $  M$. Accordingly,  using the spinor bundle on $\widetilde{M}$, the spinor bundle on $M$ is \linebreak described. Assume that $S\widetilde{M}$ is a spinor bundle on $\widetilde{M}$, that is, \linebreak$S\widetilde{M}=Prin_{Spin^{+}\left( 3,1\right)}\widetilde{M}\times _{\widetilde{\rho }_{3,1}}\mathbb{R}^{4}$ , where $\begin{array}{c}
\widetilde{\rho }_{3,1}:Spin^{+}\left( 3,1\right) \rightarrow
Aut\left(\mathbb{R}^{4}\right) \end{array}$, $\mathbb{R}^{4}$ is a module for $C\ell _{3,1}$, $ Aut\left(\mathbb{R}^{4}\right) $ is group of automorphisms on $ \mathbb{R}^{4} $ and $ Spin^{+}\left( 3,1\right) $ is a connected component of $ Spin\left( 3,1\right) $. Locally, let $\widetilde{U}$ be an open set of $\widetilde{M}$. Then, $\phi _{r}=\left[ \widetilde{s},\alpha _{r}\right] $ is written from $S\widetilde{M}$ where $\phi _{r}\in \Gamma \left( S\widetilde{M}\right) $ is a locally section of spinor bundle, $[ , ] $ is an equivalence class and $\alpha _{r}:\widetilde{U}\rightarrow \mathbb{R}^{4}$ and $\widetilde{s}:\widetilde{U}\rightarrow Prin_{Spin\left( 3,1\right)
}\widetilde{M}$ are smooth maps. In this situation, the spinor field $\phi _{r}$ is regarded as the element of a associated bundle since each spinor bundle is actually the associated bundle. Then, there is an equivalence relation $\sim$ for $ u\in Spin\left(3,1\right) $ such that
\begin{equation}
\left[ \widetilde{s},\alpha _{r}\right] \sim \left[ \widetilde{s}u,%
\widetilde{\rho }_{3,1}\left( u^{-1}\right) \alpha _{r}\right] 
\end{equation}
where $\widetilde{s}\in Prin_{Spin\left( 3,1\right) }\widetilde{M}$ and $\alpha _{r}\in \mathbb{R}^{4}$. So, we have
\begin{equation}
\phi _{r}\left\vert _{M}\right. =\left[ \widetilde{s}\left\vert _{\widetilde{U}\cap
M}\right. ,\alpha _{r}\left\vert _{\widetilde{U}\cap M}\right. \right] 
\end{equation}
when we restrict the spinor field $\phi _{r}$ to $M$. Since $\sim $ could not be an equivalence relation for $M$, the equivalence relation $\sim $ should be revised. So, it will be $\widetilde{s}\in Prin_{Spin\left( 3,1\right) }\widetilde{M}\left\vert _{M}\right. $ when $\widetilde{s}$ is restricted to $\widetilde{U}\cap M$. Also, $u\in Spin\left( 3,1\right) $ should be the element of $Spin\left(1,2,0\right) $. In that case, if we use the map $i $, then we restate the homomorphism $\widetilde{\rho } _{3,1}$ for the spin group on $M$. It is given by
\begin{align}
Spin\left( 1,2,0\right)  &  \overset{i }{\rightarrow }  
Spin\left( 3,1\right)  \\ \nonumber
 _{\widetilde{\rho }_{3,1}\circ i }\searrow  \  \ &  \  \  \  \
\downarrow  ^{\widetilde{\rho }_{3,1}} \\  \nonumber
  &  Aut\left(\mathbb{R}^{4}\right) 
\end{align}
So, we obtain the homomorphism for the group $Spin\left( 1,2,0\right)$. Then, we write equivalence relation, which gives the spinor bundle on $M$. Thus, we have 
\begin{equation}
\left[ \widetilde{s}\left\vert _{\widetilde{U}\cap M}\right. ,\alpha_{r}\left\vert _{\widetilde{U}\cap M}\right. \right] \sim \left[ \widetilde{s}\left\vert _{\widetilde{U}\cap M}\right. u,\left( \widetilde{\rho }_{3,1}\circ i \right) \left( u^{-1}\right) \alpha _{r}\left\vert _{\widetilde{U}\cap M}\right. \right] 
\end{equation}
for $u\in Spin\left( 1,2,0\right)$. From there, we find 
\begin{equation}
S\widetilde{M}\left\vert _{M}\right. =Prin_{Spin^{+}\left( 1,2,0\right)
}M\times _{\widetilde{\rho }_{3,1}\circ i }\triangle _{M}%
\end{equation}
where $ Spin^{+}\left( 1,2,0\right) $ is connected component of $ Spin\left( 1,2,0\right) $ and $ \triangle _{M} $ is a\linebreak module of representation $ \widetilde{\rho }_{3,1}\circ i $.

Now, let us express the Clifford multiplication for the hypersurface. So, reduced Clifford multiplication from $\widetilde{M}$ to $M$ is obtained as following since $\rho _{3,1}\circ i $ provides the Clifford multiplication for $M$. It is defined by 
\begin{align}
\rho _{3,1}:  C\ell _{3,1} & \rightarrow   Hom\left(\mathbb{R}^{4}, \mathbb{R}^{4}\right)  \\  \nonumber
 \phi  & \mapsto   \widetilde{\rho }_{3,1}\left( \phi \right) \left(
v\right) \equiv \phi \cdot v
\end{align}
for $\phi \in C\ell _{3,1}$ and $v\in \mathbb{R}^{4}$. So, we obtain that
\begin{align}
C\ell _{1,2,0} & \overset{i }{\rightarrow }  C\ell _{3,1}   \overset
{\widetilde{\rho }_{3,1}}{\rightarrow }  Hom\left(\mathbb{R}^{4},%
\mathbb{R}^{4}\right)  \\  \nonumber
\phi  & \mapsto  \   \phi   \  \mapsto   \phi \cdot v
\end{align}

\section{Spinorial Gauss formula for lightlike \\ hypersurfaces} 

Let $\widetilde{M}$ be a $ 4$-dimensional Lorentzian spin manifold and $M$ be a lightlike\linebreak hypersurface in $\widetilde{M}.$ Then, a complementary vector bundle $S\left( TM\right) $ of $Rad \ TM$ in $TM$ is called a screen distribution on $M$
and there exists\linebreak $TM=Rad \ TM\perp S\left( TM\right) .$ Morever, we have the following decompositions.
\begin{equation}
T\widetilde{M}=S\left( TM\right) \perp \left( Rad \ TM\oplus ltr\left(
TM\right) \right) =TM\oplus ltr\left( TM\right)
\end{equation}
where $ltr\left( TM\right) $ is a complementary vector bundle to $TM$ in $T\widetilde{M}$ and it is called lightlike transversal bundle of $M$. 

Let the locally orthonomal frame of the tangent bundle $T\widetilde{M}$ be  $\left\{ s_{1},s_{2},s_{3},s_{4}\right\} $ such that $\left\{ s_{1},s_{2},s_{3}\right\} $ and $\left\{ s_{4}\right\} $ are spacelike and timelike  vectors according to $ \widetilde{g} $, respectively. Considering these vectors, it is  possible to construct lightlike vectors. We write that $
s_{0}=\frac{1}{\sqrt{2}}\left( s_{3}+s_{4}\right)
$ and $
N=\frac{1}{\sqrt{2}}\left( s_{3}-s_{4}\right)
$ where these vectors satisfy the conditions 
\begin{align}
g\left( s_{0},N\right) =1,  g\left( s_{0},s_{i}\right) =g\left( N,s_{i}\right) =0,i=1,2.
\end{align}
Thus, the quasi orthonormal basis of $\widetilde{M}$ is given by $\left\{ N,s_{0},s_{1},s_{2}\right\} $ and the basis of $3$-dimensional lightlike subbundle $TM$ of $T\widetilde{M}$ is $\left\{s_{0},s_{1},s_{2}\right\} $ and $N$ is a normal vector field for the hypersurface $M$. 

We write Gauss-Weingarten formula for lightlike
case to obtain the induced geometric objects. Let $ \widetilde{\nabla } $ be Levi Civita connection on $ \widetilde{M} $ and $ \nabla $ be a linear connection on $ M $. So, we have 
\begin{align}
\widetilde{\nabla }_{X}Y &=\nabla _{X}Y+h\left( X,Y\right) N, \\
\widetilde{\nabla }_{X}N &=-A_{N}\left( X\right) +\nabla _{X}^{t}N  \notag
\end{align}
for $X,Y\in \Gamma \left( TM\right)$ where $\nabla _{X}Y, A_{N}\left( X\right) \in \Gamma \left( TM\right) $ and $N, \nabla _{X}^{t}N\in ltr\left( TM\right)$. Morever, $h$ is a symmetric bilinear form on $\Gamma \left( TM\right)$, $A_{N}$ is a shape operator of $M$ in $\widetilde{M}$ and $\nabla ^{t}$ is a linear connection on $ltr\left(TM\right)$ \cite{duggal1}.

Also, if $ f^{\prime } $ is defined by
\begin{align}
\label{eq1}
f^{\prime }:  TM & \rightarrow  T^{\ast }M \\  \nonumber
 X & \mapsto  f\left( X\right)(Y) =g\left( PX,Y\right) +\eta \left(
X\right) \eta \left( Y\right)
\end{align}
then it is an isomorphism where $P$ is projection morphism of $\Gamma\left( TM\right) $ on \linebreak$\Gamma \left( S\left( TM\right)\right) $ and $\eta$ is $1$-form defined by $\eta \left( X\right) =\widetilde{g}\left( N,X\right) $ \cite{atin}.

Now, we get the spinorial Gauss formula for the lightlike hypersurface with these informations. Let $S\widetilde{M}, SM $ be spinor bundles of $\widetilde{M}, M,$ and spinorial connections on the spinor bundles $S\widetilde{M}, SM $ are denoted by $\widetilde{\nabla }^{s}, \nabla ^{s}$, respectively. The connection on the spinor bundle for causal structure $\left(3,1\right) $ is given by 
\begin{equation}
\label{eq2}
\widetilde{\nabla }_{X}^{s}\Phi =X\left( \Phi \right) +\dfrac{1}{2}%
\displaystyle\sum_{i<j=1}^{4}\varepsilon _{i}\varepsilon _{j}\widetilde{g}\left( 
\widetilde{\nabla }_{X}s_{i},s_{j}\right) s_{i}\cdot s_{j}\cdot \Phi%
\end{equation}
for $X\in \Gamma \left( TM\right) ,\Phi \in \Gamma \left( S\widetilde{M}\right) $ \cite{Baum}.

\begin{theorem}
Let $\widetilde{M}$ be $4$-dimensional Lorentzian spin manifold with a metric tensor $\widetilde{g}=\left( +,+,+,-\right) $ and connection on the spinor bundle $S\widetilde{M}$ be $\widetilde{\nabla }^{s}$. We assume that $\left( M,g\right) $ is a $3$-dimensional lightlike hypersurface of $\left( \widetilde{M},\widetilde{g}\right) $ and $\nabla ^{s}$ is the connection on spinor bundle $SM$. The relation between these connections is given by 
\begin{equation}
\widetilde{\nabla }_{X}^{s}\varphi =\nabla _{X}^{s}\varphi +\dfrac{1}{2}%
\displaystyle\sum_{i=2}^{2}h\left( X,s_{i}\right) s_{i}\cdot N\cdot \varphi%
\end{equation}
for $X\in \Gamma \left( TM\right) $, $\varphi \in \Gamma \left(SM\right) $. Here, $N$ is a normal vector field on $M$, $s_{i}$ is a locally orthonormal basis vector field on $TM $ and $h$ is a symmetric bilinear form. 
\end{theorem}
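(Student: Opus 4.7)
The plan is to start from the defining formula \eqref{eq2} for $\widetilde{\nabla}^{s}_{X}\Phi$, which is written in the orthonormal frame $\{s_{1},s_{2},s_{3},s_{4}\}$ of $T\widetilde{M}$, and re-express it in the quasi-orthonormal frame $\{s_{0},s_{1},s_{2},N\}$ adapted to the lightlike hypersurface $M$. Since $s_{3}=\tfrac{1}{\sqrt{2}}(s_{0}+N)$ and $s_{4}=\tfrac{1}{\sqrt{2}}(s_{0}-N)$, every Clifford product $s_{i}\cdot s_{j}$ with $i<j\in\{1,\dots,4\}$ expands linearly into products among the new basis vectors, and the signature factors $\varepsilon_{i}\varepsilon_{j}$ collapse into coefficients governed by the pairings $\widetilde{g}(s_{0},N)=1$ and $\widetilde{g}(s_{\alpha},s_{\beta})=\delta_{\alpha\beta}$ for $\alpha,\beta\in\{1,2\}$.

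Next, I would split the resulting expression into a tangential piece and a transversal piece. The tangential piece collects only Clifford products $s_{\alpha}\cdot s_{\beta}$ with both indices in $\{0,1,2\}$, weighted by connection coefficients that, by the first Gauss--Weingarten equation $\widetilde{\nabla}_{X}Y=\nabla_{X}Y+h(X,Y)N$, equal the intrinsic $g(\nabla_{X}s_{\alpha},s_{\beta})$. Under the reduction of the spin structure from $\widetilde{M}$ to $M$ constructed in the previous section, the tangential piece is precisely the local formula for $\nabla^{s}_{X}\varphi$ acting on the restricted spinor field. The transversal piece then gathers the remaining terms, each of which carries a Clifford factor involving $N$ and a coefficient of the form $\widetilde{g}(\widetilde{\nabla}_{X}s_{\alpha},N)$ or $\widetilde{g}(\widetilde{\nabla}_{X}N,s_{\alpha})$.

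Finally, I would rewrite the transversal coefficients using Gauss--Weingarten: the identity $\widetilde{g}(\widetilde{\nabla}_{X}s_{\alpha},N)=h(X,s_{\alpha})$ converts them directly into second fundamental form values, while the contributions from $\widetilde{\nabla}_{X}N=-A_{N}(X)+\nabla^{t}_{X}N$ either pair against $N$ itself and vanish by $\widetilde{g}(N,N)=0$, or combine symmetrically with the first type to absorb the factor $\tfrac{1}{2}$. Collecting the surviving terms produces the desired expression $\tfrac{1}{2}\sum_{i}h(X,s_{i})\,s_{i}\cdot N\cdot\varphi$. The principal obstacle is the bookkeeping through this lightlike change of frame: because $s_{0}$ and $N$ are null, the familiar $\varepsilon_{i}$ signs no longer apply, and one must apply the Clifford relations $s_{0}\cdot N+N\cdot s_{0}=-2$ and $s_{0}\cdot s_{0}=N\cdot N=0$ consistently, so that the degenerate products involving $s_{0}$ cancel out of the tangential piece and only the spacelike indices survive in the final second fundamental form sum.
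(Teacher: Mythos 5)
Your overall skeleton --- expand the spinorial connection formula for $\widetilde{M}$ in the frame $\{s_1,s_2,s_3,s_4\}$, substitute $s_3=\tfrac{1}{\sqrt2}(s_0+N)$ and $s_4=\tfrac{1}{\sqrt2}(s_0-N)$, split into a tangential piece (the intrinsic $\nabla^s$) and a transversal piece, and convert the transversal coefficients with Gauss--Weingarten --- is exactly the paper's route. But the identity you rely on to produce the second fundamental form, $\widetilde{g}(\widetilde{\nabla}_X s_\alpha, N)=h(X,s_\alpha)$, is false in the lightlike setting, and this is precisely the point where the lightlike case departs from the non-degenerate one. From the Gauss formula $\widetilde{\nabla}_X Y=\nabla_X Y+h(X,Y)N$ with $\nabla_X Y\in\Gamma(TM)$, $\widetilde{g}(N,N)=0$ and $\widetilde{g}(N,s_0)=1$, one gets
\begin{equation*}
\widetilde{g}\left(\widetilde{\nabla}_X s_\alpha, s_0\right)=h(X,s_\alpha),\qquad
\widetilde{g}\left(\widetilde{\nabla}_X s_\alpha, N\right)=g\left(\nabla_X s_\alpha, N\right),
\end{equation*}
since $s_0\in Rad\ TM$ annihilates the $g$-pairing with the tangential part, while pairing with the null transversal $N$ extracts the $\eta$-component of the induced connection, not $h$. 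The roles of $s_0$ and $N$ in the metric duality are therefore the reverse of what you wrote.

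Carrying out your plan with the identity as stated attaches the coefficients to the wrong Clifford factors: the $h(X,s_\alpha)$ values must end up multiplying $s_\alpha\cdot N\cdot\varphi$ (they arise from the pairing with $s_0$), while the $g(\nabla_X s_\alpha,N)$ terms must multiply $s_\alpha\cdot s_0\cdot\varphi$ and remain inside the intrinsic connection $\nabla^s$ on $M$ --- they neither vanish nor turn into second-fundamental-form values. In the paper's computation the two $\tfrac14$-weighted contributions coming from $s_\alpha\cdot s_3$ and $s_\alpha\cdot s_4$ combine so that $h(X,s_\alpha)$ survives only against $s_\alpha\cdot N$ with the factor $\tfrac12$ intact (it is not absorbed), and $g(\nabla_X s_\alpha,N)$ survives only against $s_\alpha\cdot s_0$. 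Once you correct this duality, the remaining bookkeeping you describe (the relations $s_0\cdot N+N\cdot s_0=-2$ and $s_0\cdot s_0=N\cdot N=0$) does go through.
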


\begin{proof}
 If we write more clearly (\ref{eq2}), we have 
\begin{align*}
\widetilde{\nabla }_{X}^{s}\Phi =X\left( \Phi \right) +\dfrac{1}{2} \biggl(&\widetilde{g}\left( \widetilde{\nabla }_{X}s_{1},s_{2}\right) s_{1}\cdot s_{2}\cdot \Phi +\widetilde{g}\left( \widetilde{\nabla }_{X}s_{1},s_{3}\right) s_{1}\cdot s_{3}\cdot \Phi \\ 
&-\widetilde{g}\left( \widetilde{\nabla }_{X}s_{1},s_{4}\right) s_{1}\cdot s_{4}\cdot \Phi +\widetilde{g}\left( \widetilde{\nabla }_{X}s_{2},s_{3}\right) s_{2}\cdot s_{3}\cdot \Phi \\ 
&-\widetilde{g}\left( \widetilde{\nabla }_{X}s_{2},s_{4}\right) s_{2}\cdot s_{4}\cdot \Phi -\widetilde{g}\left( \widetilde{\nabla }_{X}s_{3},s_{4}\right) s_{3}\cdot s_{4}\cdot \Phi )\biggr)
\end{align*}
for $X\in \Gamma \left( TM\right) $, $\varphi \in \Gamma \left(SM\right) $ where $s_{i}$ is a locally orthonormal basis vector field on $T\widetilde{M} $ and $\widetilde{\nabla }^{s}$ is the connection on the spinor bundle $S\widetilde{M}$. If we use Gauss formula and write $s_{3}=\dfrac{1}{\sqrt{2}}\left( s_{0}+N\right) $, $s_{4}=\dfrac{1}{\sqrt{2}}\left( s_{0}-N\right) $ instead of $s_{3}, s_{4}$, then we have 
\begin{align*}
\left( \widetilde{\nabla }_{X}^{s}\Phi \right) \left\vert _{M}\right.  
=& X\left( \Phi \right) \left\vert _{M}\right. +\dfrac{1}{2}g\left( \nabla
_{X}s_{1},s_{2}\right) s_{1}\cdot s_{2}\cdot \Phi \left\vert _{M}\right.  \\ 
&+\dfrac{1}{4}\biggl[ 
\left\{ \left[ g\left( \nabla _{X}s_{1},N\right) +h\left( X,s_{1}\right) %
\right] s_{1}\cdot \left( s_{0}+N\right) \cdot \Phi \left\vert _{M}\right.
\right\} \\ 
& -\left\{ \left[ h\left( X,s_{1}\right) -g\left( \nabla _{X}s_{1},N\right) %
\right] s_{1}\cdot \left( s_{0}-N\right) \cdot \Phi \left\vert _{M}\right.
\right\} \\ 
&+\left\{ \left[ g\left( \nabla _{X}s_{2},N\right) +h\left( X,s_{2}\right) %
\right] s_{2}\cdot \left( s_{0}+N\right) \cdot \Phi \left\vert _{M}\right.
\right\} \\ 
&-\left\{ \left[ -g\left( \nabla _{X}s_{2},N\right) +h\left( X,s_{2}\right) %
\right] s_{2}\cdot \left( s_{0}-N\right) \cdot \Phi \left\vert _{M}\right.
\right\} \biggr].
\end{align*}
Also, we find that 
\begin{align*}
\widetilde{\nabla }_{X}^{s}\left( \Phi \left\vert _{M}\right. \right)
=X\left( \Phi \left\vert _{M}\right. \right) +\dfrac{1}{2}& \biggl[ 
g\left( \nabla _{X}s_{1},s_{2}\right) s_{1}\cdot s_{2}\cdot \Phi \left\vert
_{M}\right.+g\left( \nabla _{X}s_{1},N\right) s_{1}\cdot s_{0}\cdot \Phi \left\vert
_{M}\right. \\ 
&+h\left( X,s_{1}\right) s_{1}\cdot N\cdot \Phi \left\vert _{M}\right.  +h\left( X,s_{2}\right) s_{2}\cdot N\cdot \Phi \left\vert _{M}\right.\\ 
&+g\left( \nabla _{X}s_{2},N\right) s_{2}\cdot s_{0}\cdot \Phi \left\vert
_{M}\right.
\biggr]
\end{align*}
from $X\left( \Phi \right) \left\vert _{M}\right. =X\left( \Phi \left\vert _{M}\right. \right) $ and $\left( \widetilde{\nabla } _{X}^{s}\Phi \right) \left\vert _{M}\right. =\widetilde{\nabla } _{X}^{s}\left( \Phi \left\vert _{M}\right. \right) $. If we show as\linebreak $\Phi \left\vert _{M}\right. =\varphi $, we obtain
\begin{align*}
\widetilde{\nabla }_{X}^{s}\varphi =X\left( \varphi \right)  +\dfrac{1}{2}%
\biggl[& g\left( \nabla _{X}s_{1},s_{2}\right) s_{1}\cdot s_{2}\cdot \varphi +g\left(
\nabla _{X}s_{1},N\right) s_{1}\cdot s_{0}\cdot \varphi \\ &+h\left( X,s_{1}\right) s_{1}\cdot N\cdot \varphi +h\left( X,s_{2}\right)
s_{2}\cdot N\cdot \varphi \\ 
&+g\left( \nabla _{X}s_{2},N\right) s_{2}\cdot s_{0}\cdot \varphi
\biggr].
\end{align*}
So, we get
\begin{equation*}
\widetilde{\nabla }_{X}^{s}\varphi =\nabla _{X}^{s}\varphi +\dfrac{1}{2}%
\displaystyle\sum_{i=1}^{2}h\left( X,s_{i}\right) s_{i}\cdot N\cdot \varphi%
\end{equation*}
since the covariant derivative on spinor bundle $SM$  is 
\begin{align*}
\nabla _{X}^{s}\varphi =X\left( \varphi \right) +\dfrac{1}{2}\biggl[&
-g\left( \nabla _{X}s_{1},N\right) s_{0}\cdot s_{1}\cdot \varphi -g\left(
\nabla _{X}s_{2},N\right) s_{0}\cdot s_{2}\cdot \varphi \\ &+g\left( \nabla _{X}s_{1},s_{2}\right) s_{1}\cdot s_{2}\cdot \varphi
\biggr].
\end{align*}
\end{proof}            
So, the obtained this formula is called spinorial Gauss formula for lightlike hypersurfaces.
\begin{theorem}
Let $\widetilde{M}$ be a $4$-dimensional Lorentzian spin manifold whose\linebreak Riemannian curvature is denoted by $\widetilde{R}$ and $M$ be a hypersurface of $\widetilde{M}$ whose Riemannian curvature associated with spinor bundle is
denoted by $R$. The\linebreak relationship between their Riemannian curvatures is as
the following.
\begin{align}
\widetilde{R}\left( X,Y\right) \varphi =& R\left( X,Y\right) \varphi
-g\left( R\left( X,Y\right) s_{0},N\right) s_{0}\cdot N\cdot \varphi \\  \nonumber
& +\left[ g\left( \nabla _{X}s_{0},A_{N}\left( Y\right) \right)
-g\left( \nabla _{Y}s_{0},A_{N}\left( X\right) \right) \right] s_{0}\cdot
N\cdot \varphi \\  \nonumber
& +\left[ g\left( \nabla _{X}s_{0},N\right) \nabla
_{Y}s_{0}-g\left( \nabla _{Y}s_{0},N\right) \nabla _{X}s_{0}\right] N\cdot
\varphi \\  \nonumber
&+\left[ g\left( \nabla _{X}s_{0},N\right) \widetilde{\nabla }%
_{Y}N-g\left( \nabla _{Y}s_{0},N\right) \widetilde{\nabla }_{X}N\right]
s_{0}\cdot \varphi .
\end{align}
\end{theorem}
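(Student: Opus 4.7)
The plan is to derive the stated identity from the spinorial Gauss formula of the preceding theorem by computing the curvature directly from its definition
\[
\widetilde{R}(X,Y)\varphi = \widetilde{\nabla}_{X}^{s}\widetilde{\nabla}_{Y}^{s}\varphi - \widetilde{\nabla}_{Y}^{s}\widetilde{\nabla}_{X}^{s}\varphi - \widetilde{\nabla}_{[X,Y]}^{s}\varphi
\]
and substituting $\widetilde{\nabla}_{X}^{s}\varphi = \nabla_{X}^{s}\varphi + \tfrac{1}{2}\sum_{i=1}^{2} h(X,s_{i})\, s_{i}\cdot N\cdot \varphi$ in every term. The inner $\nabla^{s}\nabla^{s}$ piece reassembles into $R(X,Y)\varphi$, while the cross contributions produced by the correction $\tfrac{1}{2}\sum h(\cdot ,s_{i})\, s_{i}\cdot N$ should, after antisymmetrization in $X$ and $Y$, reproduce the four bracketed terms on the right-hand side.

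First I would apply the Leibniz rule for the spinorial connection on Clifford products, $\widetilde{\nabla}_{X}^{s}(Z\cdot \psi) = (\widetilde{\nabla}_{X}Z)\cdot \psi + Z\cdot \widetilde{\nabla}_{X}^{s}\psi$, to each factor of the form $h(Y,s_{i})\, s_{i}\cdot N\cdot \varphi$, expanding $\widetilde{\nabla}_{X}s_{i}$ and $\widetilde{\nabla}_{X}N$ via the Gauss--Weingarten relations $\widetilde{\nabla}_{X}s_{i} = \nabla_{X}s_{i} + h(X,s_{i})N$ and $\widetilde{\nabla}_{X}N = -A_{N}(X) + \nabla_{X}^{t}N$. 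Then I would antisymmetrize in $X,Y$ and subtract $\widetilde{\nabla}_{[X,Y]}^{s}\varphi$; the symmetric pieces cancel and the derivatives of $h$ combine into the mixed tangential-normal component of the ambient curvature. At this point I would invoke the Clifford identities $s_{0}\cdot s_{0} = 0$, $N\cdot N = 0$, $s_{0}\cdot N + N\cdot s_{0} = -2g(s_{0},N) = -2$, together with $g(s_{0},s_{i}) = g(N,s_{i}) = 0$, to collapse the double Clifford products into precisely the three combinations $s_{0}\cdot N\cdot \varphi$, $N\cdot \varphi$ and $s_{0}\cdot \varphi$ that appear in the statement.

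The main obstacle will be the systematic bookkeeping: carefully distinguishing $\widetilde{\nabla}_{X}s_{0}$ from $\nabla_{X}s_{0}$ (they differ by a term proportional to $N$), tracking the tangential versus transversal parts of $\widetilde{\nabla}_{X}N$, and verifying that the antisymmetrized $h$-derivative terms reorganize into exactly the coefficient $g(R(X,Y)s_{0},N)$ together with the shape-operator bracket $g(\nabla_{X}s_{0},A_{N}(Y)) - g(\nabla_{Y}s_{0},A_{N}(X))$. This final identification is a Codazzi-type relation adapted to the lightlike hypersurface and is the only step that is not purely formal Clifford algebra; once it is established, the remaining terms assemble unambiguously into the four brackets on the right-hand side of the claimed identity.
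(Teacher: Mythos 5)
Your plan follows essentially the same route as the paper: expand $\widetilde{R}(X,Y)\varphi$ from its definition, substitute the spinorial Gauss formula, apply the Leibniz rule together with the Gauss--Weingarten relations, and simplify with the Clifford identities for $s_{0}$ and $N$. In fact the paper's own proof stops at the intermediate antisymmetrized expression involving the derivatives of $h$ and never carries out the final Codazzi-type identification you single out, so your outline is, if anything, slightly more explicit about the one step that remains.
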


\begin{proof}
We assume that $s_{i}$ is locally frame field for $U\subset M$ , $N$ is a normal vector field on $M$ and $h$ is symmetric bilinear form, which is coefficient of the second fundamental form. So, for $X,Y\in \Gamma \left(TM\right) $ and $\varphi \in \Gamma \left( SM\right) $, we have
\begin{align*}
\widetilde{R}\left( X,Y\right) \varphi  = & \nabla _{X}^{s}\left( \nabla _{Y}^{s}\varphi \right) +\dfrac{1}{2}%
\displaystyle\sum_{i=1}^{2}h\left( X,s_{i}\right) s_{i}\cdot N\cdot \nabla
_{Y}^{s}\varphi \\ 
&+\dfrac{1}{2}\displaystyle\sum_{i=1}^{2} \biggl[ 
\left( \widetilde{\nabla }_{X}h\right) \left( Y,s_{i}\right) s_{i}\cdot
N\cdot \varphi+h\left( \widetilde{\nabla }_{X}Y,s_{i}\right) s_{i}\cdot
N\cdot \varphi  \\ 
&+h\left( Y,\widetilde{\nabla }_{X}s_{i}\right) s_{i}\cdot N\cdot \varphi+h\left( Y,s_{i}\right) \widetilde{\nabla }_{X}s_{i}\cdot N\cdot \varphi  \\ 
&+h\left( Y,s_{i}\right) s_{i}\cdot \widetilde{\nabla }_{X}N\cdot \varphi+h\left( Y,s_{i}\right) s_{i}\cdot N\cdot \widetilde{\nabla }_{X}^{s}\varphi 
\biggr]  \\ 
& -\nabla _{Y}^{s}\left( \nabla _{X}^{s}\varphi \right) -\dfrac{1%
}{2}\displaystyle\sum_{i=1}^{2}h\left( Y,s_{i}\right) s_{i}\cdot N\cdot \nabla
_{X}^{s}\varphi \\ 
&-\dfrac{1}{2}\displaystyle\sum_{i=1}^{2} \biggl[
\left( \widetilde{\nabla }_{Y}h\right) \left( X,s_{i}\right) s_{i}\cdot
N\cdot \varphi+h\left( \widetilde{\nabla }_{Y}X,s_{i}\right) s_{i}\cdot
N\cdot \varphi  \\ 
&+h\left( X,\widetilde{\nabla }_{Y}s_{i}\right) s_{i}\cdot N\cdot \varphi+h\left( X,s_{i}\right) \widetilde{\nabla }_{Y}s_{i}\cdot N\cdot \varphi  \\ 
&+h\left( X,s_{i}\right) s_{i}\cdot \widetilde{\nabla }_{Y}N\cdot \varphi+h\left( X,s_{i}\right) s_{i}\cdot N\cdot \widetilde{\nabla }_{Y}^{s}\varphi 
\biggr]  -\widetilde{\nabla }_{\widetilde{\nabla }_{X}Y-\widetilde{%
\nabla }_{Y}X}^{s}\varphi .
\end{align*}
If we use Gauss-Weingarten equations, then $\widetilde{\nabla }_{X}Y-\widetilde{\nabla }_{Y}X=\nabla _{X}Y-\nabla _{Y}X$ and $N\cdot N=0$. Thus, we obtain
\begin{align*}
\widetilde{R}\left( X,Y\right) \varphi = & R\left( X,Y\right) \varphi +
\dfrac{1}{2}\displaystyle\sum_{i=1}^{2}\biggl[ 
X\left( h\left( Y,s_{i}\right) \right)-Y\left( h\left( X,s_{i}\right) \right)\biggr] s_{i}\cdot N\cdot \varphi   \\ 
&+\dfrac{1}{2}\displaystyle\sum_{i=1}^{2}\biggl[ 
-h\left( \left[ X,Y\right] ,s_{i}\right) s_{i}\cdot N\cdot \varphi +h\left(
Y,s_{i}\right) \nabla _{X}s_{i}\cdot N\cdot \varphi  \\ 
& -h\left( X,s_{i}\right) \nabla _{Y}s_{i}\cdot N\cdot \varphi +h\left(
Y,s_{i}\right) s_{i}\cdot \widetilde{\nabla }_{X}N\cdot \varphi \\ 
& -h\left( X,s_{i}\right) s_{i}\cdot \widetilde{\nabla }_{Y}N\cdot \varphi 
\biggr] .
\end{align*}
\end{proof} 

\begin{theorem}
Let $\widetilde{M}$ be $4$-dimensional Lorentzian spin manifold and $M$ be a lightlike hypersurface. If $ M $ is a totally geodesic, the spinor covariant derivative of hypersurface $M$ and manifold $\widetilde{M}$ are the same. 
\end{theorem}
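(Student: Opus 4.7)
The plan is to apply the spinorial Gauss formula established in the preceding theorem, together with the characterization of totally geodesic lightlike hypersurfaces in terms of the second fundamental form. First I would recall that the spinorial Gauss formula reads
\begin{equation*}
\widetilde{\nabla}_{X}^{s}\varphi = \nabla_{X}^{s}\varphi + \dfrac{1}{2}\displaystyle\sum_{i=1}^{2} h(X, s_{i})\, s_{i}\cdot N\cdot \varphi
\end{equation*}
for $X \in \Gamma(TM)$ and $\varphi \in \Gamma(SM)$, where $h$ is the symmetric bilinear form appearing in the Gauss-Weingarten equations $\widetilde{\nabla}_{X}Y = \nabla_{X}Y + h(X,Y) N$.

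Next I would invoke the standard characterization: a lightlike hypersurface $M$ of $\widetilde{M}$ is called totally geodesic precisely when the symmetric bilinear form $h$ vanishes identically on $\Gamma(TM) \times \Gamma(TM)$. This is the lightlike analogue of the familiar non-degenerate case and is equivalent to saying that $\widetilde{\nabla}_{X}Y \in \Gamma(TM)$ for every $X, Y \in \Gamma(TM)$, so that the induced connection $\nabla$ coincides with the restriction of the ambient Levi-Civita connection $\widetilde{\nabla}$.

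Substituting $h \equiv 0$ into the spinorial Gauss formula makes the summation vanish termwise, yielding
\begin{equation*}
\widetilde{\nabla}_{X}^{s}\varphi = \nabla_{X}^{s}\varphi
\end{equation*}
for every $X \in \Gamma(TM)$ and every $\varphi \in \Gamma(SM)$, which is the conclusion. There is no real obstacle here; the content of the theorem is simply the observation that the extrinsic correction term in the spinorial Gauss formula is carried entirely by $h$, so the totally geodesic hypothesis forces the two spinor covariant derivatives to agree. The only point worth stating explicitly in the write-up is the defining condition $h = 0$ for a totally geodesic lightlike hypersurface, in order to justify the vanishing of the correction sum.
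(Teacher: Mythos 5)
Your proof is correct and is exactly the intended argument: the paper states this theorem without proof as an immediate consequence of the spinorial Gauss formula, and your observation that the totally geodesic condition $h\equiv 0$ kills the correction term $\frac{1}{2}\sum_{i=1}^{2}h(X,s_{i})\,s_{i}\cdot N\cdot\varphi$ is precisely what is needed. Your explicit citation of the defining condition $h=0$ for a totally geodesic lightlike hypersurface is a worthwhile addition, since that is the only nontrivial input.
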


\begin{theorem}
Let $\widetilde{M}$ be $4$-dimensional Lorentzian spin manifold and $M$ be a lightlike hypersurface. If $ M $ is a totally umbilical, there exists the relation
\begin{align}
s_{k}=s_{0}\text{ ise, }\widetilde{\nabla }_{s_{k}}^{s}\varphi &=\nabla
_{s_{k}}^{s}\varphi \\ \nonumber
s_{k}\neq s_{0}\text{ ise, }\widetilde{\nabla }_{s_{k}}^{s}\varphi &=\nabla
_{s_{k}}^{s}\varphi +\dfrac{1}{2}\varepsilon_{k}c_{k}s_{k}\cdot N\cdot \varphi
\end{align}
between the spinor covariant derivative of $M$ and $\widetilde{M}$. Here, $c_{k}$ is constant, $s_{i}$ is a locally frame field for open set $U\subset M$ and $N$ is the normal vector field on $M$.
\end{theorem}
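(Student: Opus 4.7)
The plan is to specialize the spinorial Gauss formula established in Theorem 10, namely
\begin{equation*}
\widetilde{\nabla}_{X}^{s}\varphi = \nabla_{X}^{s}\varphi + \frac{1}{2}\sum_{i=1}^{2} h(X,s_{i})\, s_{i}\cdot N\cdot \varphi,
\end{equation*}
to the particular frame $\{s_{0},s_{1},s_{2}\}$ of $TM$ and then to exploit the totally umbilical hypothesis on $M$. The totally umbilical condition for a lightlike hypersurface means that the second fundamental form coefficient $h$ is proportional to the induced metric, i.e.\ $h(X,Y)=\rho\, g(X,Y)$ for all $X,Y\in\Gamma(TM)$, where $\rho$ is a smooth function; assuming this function takes constant value yields the constants $c_{k}$ that appear in the statement.

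First I would handle the case $X = s_{0}$. Because $s_{0}\in \operatorname{Rad}\,TM$ and the screen basis vectors $s_{1},s_{2}$ are taken from $S(TM)$, we have $g(s_{0},s_{i})=0$ for $i=1,2$. Under the totally umbilical hypothesis this immediately forces $h(s_{0},s_{i})=\rho\, g(s_{0},s_{i})=0$, so the correction sum in the spinorial Gauss formula vanishes term by term, yielding $\widetilde{\nabla}^{s}_{s_{0}}\varphi=\nabla^{s}_{s_{0}}\varphi$, which is the first asserted identity.

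Second I would treat the case $X=s_{k}$ with $k\in\{1,2\}$, i.e.\ $s_{k}\neq s_{0}$. Since $s_{1},s_{2}$ form an orthonormal frame of the screen distribution with $g(s_{k},s_{i})=\varepsilon_{k}\delta_{ki}$, the totally umbilical assumption gives $h(s_{k},s_{i})=\rho\,\varepsilon_{k}\delta_{ki}$. Substituting into the spinorial Gauss formula collapses the sum to a single surviving term with $i=k$:
\begin{equation*}
\widetilde{\nabla}^{s}_{s_{k}}\varphi = \nabla^{s}_{s_{k}}\varphi + \frac{1}{2}\rho\,\varepsilon_{k}\, s_{k}\cdot N\cdot \varphi,
\end{equation*}
and setting $c_{k}:=\rho$ produces the second claimed identity.

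The proof is essentially book-keeping once the definition is fixed; the main obstacle will be stating precisely what totally umbilical means for a lightlike hypersurface in this setting, since the induced metric is degenerate and $h$ is only a bilinear form on $\Gamma(TM)$, not a tensor built from a shape operator relative to a nondegenerate normal. I would therefore explicitly record the defining relation $h(X,Y)=\rho\, g(X,Y)$ (consistent with \cite{duggal1}), note that constancy of $\rho$ along the frame directions is what allows the scalars $c_{k}$ to appear, and then the two bullet identities follow directly from the two cases above without further computation.
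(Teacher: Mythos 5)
Your proposal is correct and is the natural (surely the intended) argument: the paper states this theorem without any proof, and the direct route is exactly what you do, namely specializing the spinorial Gauss formula of Theorem 1 to $X=s_{0}$ and $X=s_{k}$, $k=1,2$, using the umbilicity relation $h(X,Y)=\rho\,g(X,Y)$ so that $h(s_{0},s_{i})=0$ and $h(s_{k},s_{i})=\rho\,\varepsilon_{k}\delta_{ki}$. You are also right to flag the one genuine wrinkle, which lies in the statement rather than in your argument: for a totally umbilical lightlike hypersurface $\rho$ is in general only a smooth function, so the ``constants'' $c_{k}$ (which moreover all coincide with $\rho$, making the index $k$ superfluous) require the extra assumption of constancy that you make explicit.
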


\section{Dirac Operator for Lightlike Hypersurfaces} 

\begin{theorem}
Let $\left( \widetilde{M},\widetilde{g}\right) $ be $4$-dimensional Lorentzian spin manifold and $M$ be a lightlike hypersurface of $\widetilde{M}$. Dirac operator reduced by $\widetilde{M}$ on $M$ is given as
\begin{equation}
D=\displaystyle\sum_{i=1}^{2} s_{i}\cdot \nabla _{s_{i}}^{s}+ s_{0}\cdot \nabla _{s_{0}}^{s}%
\end{equation}
where $s_{i}$ is locally frame field for $U\subset M$, $s_{0}$ is lightlike vector field on $TM\left\vert _{U}\right. $, $N$ is a normal vector field on $M$  and $\nabla ^{s}$ is connection on spinor bundle $SM$.
\end{theorem}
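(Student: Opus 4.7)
The plan is to express the ambient Dirac operator $\widetilde{D}$ in the quasi-orthonormal frame $\{N,s_0,s_1,s_2\}$ adapted to the lightlike hypersurface, drop the transverse-derivative contribution (which is not intrinsically defined on $M$), and rewrite what remains using the spinorial Gauss formula of Theorem 5.

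Starting from $\widetilde{D}\varphi = \sum_{i=1}^{4}\varepsilon_i\, s_i\cdot\widetilde{\nabla}^s_{s_i}\varphi$, I would substitute $s_3=\frac{1}{\sqrt{2}}(s_0+N)$ and $s_4=\frac{1}{\sqrt{2}}(s_0-N)$. Bilinearity of Clifford multiplication together with linearity of $\widetilde{\nabla}^s$ in its direction argument shows that the diagonal $s_0\cdot\widetilde{\nabla}^s_{s_0}$ and $N\cdot\widetilde{\nabla}^s_{N}$ contributions cancel between the $s_3$ and $s_4$ pieces, leaving exactly the cross terms $s_0\cdot\widetilde{\nabla}^s_N + N\cdot\widetilde{\nabla}^s_{s_0}$. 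Thus
\[
\widetilde{D}\varphi \;=\; \sum_{i=1}^{2} s_i\cdot\widetilde{\nabla}^s_{s_i}\varphi \;+\; s_0\cdot\widetilde{\nabla}^s_{N}\varphi \;+\; N\cdot\widetilde{\nabla}^s_{s_0}\varphi .
\]

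For $\varphi\in\Gamma(SM)$ the term $s_0\cdot\widetilde{\nabla}^s_N\varphi$ is not defined intrinsically on $M$, so the reduction discards it. To the remaining tangential pieces I would apply Theorem 5, substituting $\widetilde{\nabla}^s_X = \nabla^s_X + \frac{1}{2}\sum_{j=1}^2 h(X,s_j)\, s_j\cdot N$ for each $X\in TM$. The lightlike Clifford identities $N\cdot N=0$ and $N\cdot s_j+s_j\cdot N=0$ (consequences of $\widetilde g(N,N)=\widetilde g(N,s_j)=0$) force $N\cdot s_j\cdot N=0$, annihilating the extrinsic correction inside $N\cdot\widetilde{\nabla}^s_{s_0}\varphi$ and leaving just $N\cdot\nabla^s_{s_0}\varphi$. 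The Clifford-algebra embedding $i\colon C\ell_{1,2,0}\hookrightarrow C\ell_{3,1}$ constructed in Lemma 1 sends $e_0\mapsto\frac{1}{\sqrt{2}}(e_3+e_4)=s_0$, so under the reduced representation $\widetilde\rho_{3,1}\circ i$ that defines $SM$ in Section 3 the Clifford action of the ambient $N$ on $\Gamma(SM)$ agrees with that of the intrinsic lightlike vector $s_0$; replacing $N\cdot$ by $s_0\cdot$ in the surviving term produces the stated formula.

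The principal obstacle I anticipate is precisely this final identification $N\cdot\varphi = s_0\cdot\varphi$ on $\Gamma(SM)$: it is not purely formal, and one must unpack the pullback bundle $\xi^{\ast}(\mathrm{Prin}_{Spin(3,1)}\widetilde M\vert_M)$ together with the module $\triangle_M$ fixed in Section 3 to verify that $\widetilde\rho_{3,1}(N)$ and $\widetilde\rho_{3,1}(s_0)$ coincide as operators on $\triangle_M$. A secondary issue is to confirm that the extrinsic $h(s_i,s_j)\,s_i\cdot s_j\cdot N$ corrections contributed by the $i=1,2$ tangential terms either collapse via the symmetry of $h$ and the relation $s_i\cdot s_j + s_j\cdot s_i = -2\delta_{ij}$ (producing a mean-curvature-type term to be reported separately) or are understood to be absorbed into the declared form of the reduced operator $D$.
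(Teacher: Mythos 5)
Your proposal attacks a different problem from the one this theorem actually addresses, and the step you yourself flag as the ``principal obstacle'' is in fact false. The content of the theorem is not a reduction of $\widetilde{D}$ by frame decomposition but the evaluation of the intrinsic composition $D=\mu\circ f\circ\nabla^{s}$ on a hypersurface whose induced metric is degenerate: since $g$ annihilates the radical direction $s_{0}$, the usual musical isomorphism $TM\to T^{\ast}M$ does not exist, and the paper replaces it by the pseudo-inverse $f^{\prime}(X)(Y)=g(PX,Y)+\eta(X)\eta(Y)$ of Atindogbe--Ezin recalled in (\ref{eq1}). The paper's whole proof consists of computing $f(w^{i})$ for the dual coframe: for $i=1,2$ one finds $f(w^{i})=\varepsilon_{i}s_{i}=s_{i}$, while for $i=0$ the term $g\left(P(f(w^{0})),s_{0}\right)$ vanishes because $s_{0}\in Rad\ TM$, so the condition $w^{0}(s_{0})=1$ reduces to $\widetilde{g}\left(N,f(w^{0})\right)=1$ and forces $f(w^{0})=s_{0}$. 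That is the only nontrivial point of the theorem --- it is where the coefficient $s_{0}$ (rather than $N$) in front of $\nabla^{s}_{s_{0}}$ comes from --- and your argument never engages with the degeneracy of $g$ at all.

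Concretely, two steps of your route fail. First, the identification $N\cdot\varphi=s_{0}\cdot\varphi$ on $\Gamma(SM)$ cannot hold: the Clifford relations give $N\cdot N\cdot\varphi=-\widetilde{g}(N,N)\varphi=0$, whereas $s_{0}\cdot N\cdot\varphi+N\cdot s_{0}\cdot\varphi=-2\widetilde{g}(s_{0},N)\varphi=-2\varphi$, so if $N\cdot$ and $s_{0}\cdot$ were the same operator the left-hand side would be $0$ and the right-hand side $-2\varphi$. Moreover, the algebra homomorphism $i$ of the lemma sends $e_{0}$ to $\frac{1}{\sqrt{2}}(e_{3}+e_{4})=s_{0}$, not to $N=\frac{1}{\sqrt{2}}(s_{3}-s_{4})$, so the module structure on $\triangle_{M}$ gives the radical vector the action $s_{0}\cdot$ directly; no mechanism in the construction trades $N\cdot$ for $s_{0}\cdot$. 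Second, your frame decomposition $\widetilde{D}=\sum_{i=1}^{2}s_{i}\cdot\widetilde{\nabla}^{s}_{s_{i}}+s_{0}\cdot\widetilde{\nabla}^{s}_{N}+N\cdot\widetilde{\nabla}^{s}_{s_{0}}$ is correct, but it is precisely the computation used to prove the \emph{comparison} theorem between $D$ and $\widetilde{D}$: after applying the spinorial Gauss formula the tangential pieces leave a residue $\frac{1}{2}\sum_{i,j}h(s_{i},s_{j})s_{i}\cdot s_{j}\cdot N\cdot\varphi$ whose diagonal part is a nonvanishing mean-curvature term, and these leftovers are exactly the terms $-\sum_{k}h(s_{k},s_{0})s_{0}\cdot s_{k}\cdot N\cdot\varphi+HN\cdot\varphi$ of that later theorem. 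They belong in the relation between the two Dirac operators, not in the formula for $D$ itself; ``discarding'' the transverse derivative and ``absorbing'' the curvature terms is not available once $D$ has been pinned down by the intrinsic definition, so your derivation does not arrive at the stated formula.
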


\begin{proof}
Dirac operator is defined by 
\begin{equation*}
D:\Gamma \left( SM\right) \overset{\nabla ^{s}}{\rightarrow }\Gamma \left(
T^{\ast }M\otimes SM\right) \overset{f}{\rightarrow }\Gamma \left( TM\otimes
SM\right) \overset{\mu }{\rightarrow }\Gamma \left( SM\right)%
\end{equation*}
where $\nabla ^{s}$ is connection on spinor bundle $SM$, $\mu $ is Clifford multiplication and $ f $ is a map $f:\Gamma \left( T^{\ast }M\otimes S\right) \overset{}{\rightarrow }\Gamma\left( TM\otimes S\right) $. It should be an isomorphism to pass between these maps. In this situation, if $ f^{\prime } $ is defined by
\begin{align*}
f^{\prime }:  TM & \rightarrow T^{\ast }M \\ 
 X & \rightarrow  f^{\prime }\left( X\right)(Y) =g\left( PX,Y\right) +\eta \left(
X\right) \eta \left( Y\right)
\end{align*}
then it is isomorphism. According to (\ref{eq1}), $P$ is projection morphism of $\Gamma\left( TM\right) $ on $\Gamma \left( S\left( TM\right)\right) $ and $\eta$ is $1$-form defined by $\eta \left( X\right) =\widetilde{g}\left( N,X\right) $. Let $\left\{ s_{0},s_{1},s_{2}\right\} $ be a locally basis field on $U\subset M$ . Then, $f^{\prime }$ is given by 
\begin{align*}
f^{\prime }:  TM & \rightarrow  T^{\ast }M \\ 
 s_{i} & \mapsto  w^{i}
\end{align*}
for the basis vector fields where $\left( w^{i}\right) $ is dual basis of $\left(s_{i}\right) $ for $i=0,1,2$. So, we write $f: T^{\ast }M \rightarrow TM $ since $f^{\prime }$ is isomorphism. Also, the condition $w^{i}\left(s_{j}\right) =\delta _{ij}$ should be satisfied.
 \begin{itemize}
  \item 
 For $ i=1,2$, we find 
\begin{align*}
w^{i}\left( s_{i}\right) & =g\left( P\left( f\left( w^{i}\right) \right)
,s_{i}\right) +\eta \left( f\left( w^{i}\right) \right) \eta \left(
s_{i}\right) \\
&=g\left( f\left( w^{i}\right) ,s_{i}\right) +\widetilde{g}\left( N,f\left(
w^{i}\right) \right) \widetilde{g}\left( N,s_{i}\right) \\
&=g\left( f\left( w^{i}\right) ,s_{i}\right)
\end{align*}
So, we have $ f\left( w^{i}\right)=\varepsilon _{i}s_{i} $ from $g\left( f\left( w^{i}\right) ,s_{i}\right) =1$. 
 \item For $ i=0$, we obtain 
\begin{align*}
w^{0}\left( s_{0}\right) & =g\left( P\left( f\left( w^{0}\right) \right)
,s_{0}\right) +\eta \left( f\left( w^{0}\right) \right) \eta \left(
s_{0}\right) \\
& =g\left( f\left( w^{0}\right) ,s_{0}\right) +\widetilde{g}\left( N,f\left(
w^{0}\right) \right) \widetilde{g}\left( N,s_{0}\right)\\
&=\widetilde{g}\left( N,f\left( w^{0}\right) \right)
\end{align*}
So, we have $f\left( w^{0}\right) =s_{0}$.
\end{itemize}
 Then, Dirac operator is given by 
\begin{align*}
D & =\mu \circ f\circ \nabla _{s_{i}}^{s}\\
&=\mu \circ f\left( w^{i}\otimes
\nabla _{s_{i}}^{s}\right) \\
&=\mu \left( f\left( w^{i}\right) \otimes \nabla
_{s_{i}}^{s}\right) \\
&=\displaystyle\sum_{i=1}^{2}\varepsilon _{i} s_{i}\cdot \nabla
_{s_{i}}^{s}+ s_{0}\cdot \nabla _{s_{0}}^{s} \\
& =\displaystyle\sum_{i=1}^{2} s_{i}\cdot \nabla
_{s_{i}}^{s}+ s_{0}\cdot \nabla _{s_{0}}^{s} .
\end{align*}
\end{proof}       
       
\begin{theorem}
Let $\widetilde{M}$ be a $4$-dimensional Lorentzian spin manifold whose Dirac operator is denoted by $\widetilde{D}$ and $M$ be a hypersurface of $\widetilde{M}$ whose Dirac operator is denoted by $D$. The
relationship between their Dirac operators is 
\begin{equation}
D\varphi =\widetilde{D}\varphi -s_{0}\cdot \widetilde{\nabla }%
_{N}^{s}\varphi +(s_{0}-N)\cdot \widetilde{\nabla }%
_{s_{0}}^{s}\varphi -\displaystyle\sum_{k=1}^{2}h\left( s_{k},s_{0}\right)
s_{0}\cdot s_{k} \cdot N\cdot \varphi +HN\cdot \varphi%
\end{equation}
for any $\varphi\in \Gamma(SM) $ where $s_{i}$ is locally frame field for $U\subset M$, $ H $ is mean curvature and $\widetilde{\nabla }^{s}$ is connection on spinor bundle $S\widetilde{M}$.  
\end{theorem}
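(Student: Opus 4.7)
The plan is to reduce both $D$ and $\widetilde{D}$ to expressions involving only $\widetilde{\nabla}^{s}$ applied along the adapted basis $\{s_{1},s_{2},s_{0},N\}$, and then compare them term by term. The two inputs are the intrinsic expression for $D$ from Theorem 7 and the spinorial Gauss formula from Theorem 5, which translates intrinsic covariant derivatives on $M$ into ambient ones on $\widetilde{M}$ plus a Clifford correction built from $h$.

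First I would rewrite the ambient Dirac operator
\begin{equation*}
\widetilde{D}\varphi = s_{1}\cdot\widetilde{\nabla}^{s}_{s_{1}}\varphi + s_{2}\cdot\widetilde{\nabla}^{s}_{s_{2}}\varphi + s_{3}\cdot\widetilde{\nabla}^{s}_{s_{3}}\varphi - s_{4}\cdot\widetilde{\nabla}^{s}_{s_{4}}\varphi
\end{equation*}
(the sign on the last term coming from $\varepsilon_{4}=-1$) by substituting $s_{3}=\tfrac{1}{\sqrt{2}}(s_{0}+N)$ and $s_{4}=\tfrac{1}{\sqrt{2}}(s_{0}-N)$. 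Using linearity of $\widetilde{\nabla}^{s}$ in its lower argument, together with the null identities $s_{0}\cdot s_{0}=N\cdot N=0$, the combination $s_{3}\cdot\widetilde{\nabla}^{s}_{s_{3}}-s_{4}\cdot\widetilde{\nabla}^{s}_{s_{4}}$ collapses to $s_{0}\cdot\widetilde{\nabla}^{s}_{N}+N\cdot\widetilde{\nabla}^{s}_{s_{0}}$, so that
\begin{equation*}
\widetilde{D}\varphi = \sum_{i=1}^{2} s_{i}\cdot\widetilde{\nabla}^{s}_{s_{i}}\varphi + s_{0}\cdot\widetilde{\nabla}^{s}_{N}\varphi + N\cdot\widetilde{\nabla}^{s}_{s_{0}}\varphi .
\end{equation*}
This is the first place where the lightlike geometry genuinely enters.

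Next, I would take $D\varphi = \sum_{i=1}^{2} s_{i}\cdot\nabla^{s}_{s_{i}}\varphi + s_{0}\cdot\nabla^{s}_{s_{0}}\varphi$ from Theorem 7 and apply the spinorial Gauss formula to each intrinsic derivative (for $X=s_{0},s_{1},s_{2}$), turning $\nabla^{s}_{X}\varphi$ into $\widetilde{\nabla}^{s}_{X}\varphi - \tfrac{1}{2}\sum_{k=1}^{2} h(X,s_{k})\, s_{k}\cdot N\cdot\varphi$. Subtracting the rewritten $\widetilde{D}\varphi$ produces the mismatch $s_{0}\cdot\widetilde{\nabla}^{s}_{s_{0}}\varphi - s_{0}\cdot\widetilde{\nabla}^{s}_{N}\varphi - N\cdot\widetilde{\nabla}^{s}_{s_{0}}\varphi = (s_{0}-N)\cdot\widetilde{\nabla}^{s}_{s_{0}}\varphi - s_{0}\cdot\widetilde{\nabla}^{s}_{N}\varphi$, which accounts for the two derivative corrections in the theorem, plus two extrinsic pieces coming from the Clifford corrections above.

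To finish, I would split the extrinsic contribution into the $i=0$ part, which directly yields $-\sum_{k=1}^{2} h(s_{0},s_{k})\, s_{0}\cdot s_{k}\cdot N\cdot\varphi$, and the double sum $\sum_{i,k=1}^{2} h(s_{i},s_{k})\, s_{i}\cdot s_{k}\cdot N\cdot\varphi$. In the double sum the off-diagonal terms cancel in pairs because $h$ is symmetric while $s_{1}\cdot s_{2}=-s_{2}\cdot s_{1}$ (from $g(s_{1},s_{2})=0$), and the diagonal terms collapse via $s_{i}\cdot s_{i}=-g(s_{i},s_{i})=-1$ to a multiple of the mean curvature, producing the term $H\cdot N\cdot\varphi$.

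The main obstacle is the Clifford bookkeeping in the null basis, where the familiar orthonormal relations are replaced by $s_{0}\cdot s_{0}=N\cdot N=0$ and $s_{0}\cdot N+N\cdot s_{0}=-2\,g(s_{0},N)=-2$. These degeneracies are exactly what forces the asymmetric pairing of $\widetilde{\nabla}^{s}_{s_{0}}$ with $(s_{0}-N)$ and of $\widetilde{\nabla}^{s}_{N}$ with $s_{0}$ alone in the final formula, and they must be applied consistently in both the reduction of $\widetilde{D}$ and the extrinsic simplification. Once they are in place, the remaining steps are routine expansion.
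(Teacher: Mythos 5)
Your proposal is correct and follows essentially the same route as the paper: the paper starts from the intrinsic expression $D\varphi=\sum_{i=1}^{2}s_{i}\cdot\nabla^{s}_{s_{i}}\varphi+s_{0}\cdot\nabla^{s}_{s_{0}}\varphi$, substitutes the spinorial Gauss formula, and then adds and subtracts $s_{3}\cdot\widetilde{\nabla}^{s}_{s_{3}}\varphi-s_{4}\cdot\widetilde{\nabla}^{s}_{s_{4}}\varphi$ to reconstitute $\widetilde{D}$ --- which is exactly your rewriting of $\widetilde{D}$ in the null frame, read in reverse, and the final simplifications (symmetry of $h$ against anticommutativity, diagonal terms giving $H$) are identical. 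One caveat worth noting: the Gauss-formula correction for $X=s_{0}$ directly yields $-\tfrac{1}{2}\sum_{k=1}^{2}h(s_{0},s_{k})\,s_{0}\cdot s_{k}\cdot N\cdot\varphi$, not the coefficient $1$ that you assert and that the theorem states; the paper's own intermediate line carries this $\tfrac{1}{2}$ and then silently drops it in the final formula, so your derivation inherits the same factor-of-two discrepancy rather than resolving it.
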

\begin{proof} 
From Dirac operator and spinorial Gauss formula, we obtain 
\begin{align*}
D\varphi =&\displaystyle\sum_{i=1}^{2} s_{i} \cdot \nabla
_{s_{i}}^{s}\varphi + s_{0} \cdot \nabla
_{s_{0}}^{s}\varphi \\ 
=& \displaystyle\sum_{i=1}^{2} s_{i} \cdot \left[ \widetilde{%
\nabla }_{s_{i}}^{s}\varphi -\dfrac{1}{2}\displaystyle\sum_{k=1}^{2}\left[ h\left(
s_{k},s_{i}\right) s_{k}\cdot N\cdot \varphi \right] \right]\\
& + s_{0} \cdot \left[ \widetilde{%
\nabla }_{s_{0}}^{s}\varphi -\dfrac{1}{2}\displaystyle\sum_{k=1}^{2}\left[ h\left(
s_{k},s_{0}\right) s_{k}\cdot N\cdot \varphi \right] \right] \\ 
=& \displaystyle\sum_{i=1}^{2}s_{i} \cdot \widetilde{\nabla }_{s_{i}}^{s}\varphi-\dfrac{1}{2}\displaystyle\sum_{i,k=1}^{2}h\left( s_{k},s_{i}\right)
s_{i}\cdot s_{k} \cdot N\cdot \varphi +s_{0}\cdot \widetilde{\nabla }%
_{s_{0}}^{s}\varphi\\
&-\dfrac{1}{2}\displaystyle\sum_{k=1}^{2}h\left( s_{k},s_{0}\right)
s_{0}\cdot s_{k}\cdot N\cdot \varphi 
\end{align*}
for $\varphi \in \Gamma \left( SM\right) $. If we add and substract $s_{3}\cdot \widetilde{\nabla }_{s_{3}}^{s}\varphi -s_{4}\cdot \widetilde{\nabla } _{s_{4}}^{s}\varphi $ to this equation, then we find
\begin{align*}
D\varphi & =\displaystyle\sum_{i=1}^{2}s_{i}\cdot \widetilde{\nabla }%
_{s_{i}}^{s}\varphi +s_{0}\cdot \widetilde{\nabla }_{N}^{s}\varphi +N\cdot \widetilde{\nabla }_{s_{0}}^{s}\varphi-s_{0}\cdot \widetilde{\nabla }_{N}^{s}\varphi -N\cdot \widetilde{\nabla }_{s_{0}}^{s}\varphi  \\ 
&  \ -\dfrac{1}{2}\displaystyle\sum_{i,k=1}^{2}h\left( s_{k},s_{i}\right)
s_{i}\cdot s_{k} \cdot N\cdot \varphi+s_{0}\cdot \widetilde{\nabla }%
_{s_{0}}^{s}\varphi -\dfrac{1}{2}\displaystyle\sum_{k=1}^{2}h\left( s_{k},s_{0}\right)
s_{0}\cdot s_{k}\cdot N\cdot \varphi .
\end{align*}
Using $H=\dfrac{1}{n}\sum\limits h\left( e_{i},e_{i}\right) $ and $s_{i}s_{k}=-s_{k}s_{i}$, we have
\begin{equation*}
D\varphi =\widetilde{D}\varphi -s_{0}\cdot \widetilde{\nabla }%
_{N}^{s}\varphi +(s_{0}-N)\cdot \widetilde{\nabla }%
_{s_{0}}^{s}\varphi -\displaystyle\sum_{k=1}^{2}h\left( s_{k},s_{0}\right)
s_{0}\cdot s_{k} \cdot N\cdot \varphi +HN\cdot \varphi .%
\end{equation*}
\end{proof}

\begin{corollary}
Let $\widetilde{M}$ be $4$-dimensional Lorentzian spin manifold and $M$ be lightlike hypersurface of $\widetilde{M}$. If $M$ is a minimal hypersurface, the relation between Dirac operators of $M$ and $\widetilde{M}$ is 
\begin{equation}
D\varphi =\widetilde{D}\varphi -s_{0}\cdot \widetilde{\nabla }%
_{N}^{s}\varphi +(s_{0}-N)\cdot \widetilde{\nabla }%
_{s_{0}}^{s}\varphi -\displaystyle\sum_{k=1}^{2}h\left( s_{k},s_{0}\right)
s_{0}\cdot s_{k} \cdot N\cdot \varphi %
\end{equation}
for $\varphi \in \Gamma \left( SM\right) $.
\end{corollary}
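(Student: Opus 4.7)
The plan is to read off the statement as a direct specialization of the preceding theorem under the hypothesis that $M$ is minimal. Recall that a lightlike hypersurface is called minimal precisely when its mean curvature $H$ vanishes identically. In the proof of the previous theorem, the quantity $H$ was introduced via the identity $H = \tfrac{1}{n}\sum h(e_i,e_i)$, which is the same mean curvature whose vanishing defines minimality.

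First, I would cite the relation established in the previous theorem:
\begin{equation*}
D\varphi =\widetilde{D}\varphi -s_{0}\cdot \widetilde{\nabla }_{N}^{s}\varphi +(s_{0}-N)\cdot \widetilde{\nabla }_{s_{0}}^{s}\varphi -\sum_{k=1}^{2}h\left( s_{k},s_{0}\right) s_{0}\cdot s_{k} \cdot N\cdot \varphi +HN\cdot \varphi ,
\end{equation*}
valid for every $\varphi\in\Gamma(SM)$ without any assumption on $M$. Next, I would invoke the minimality hypothesis $H\equiv 0$, which immediately forces the final summand $HN\cdot\varphi$ to vanish identically on $\Gamma(SM)$. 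The four remaining terms — the ambient Dirac operator, the two transversal covariant derivatives $-s_{0}\cdot\widetilde{\nabla}_{N}^{s}\varphi$ and $(s_{0}-N)\cdot\widetilde{\nabla}_{s_{0}}^{s}\varphi$, and the shape-operator sum $-\sum_{k=1}^{2}h(s_{k},s_{0})\,s_{0}\cdot s_{k}\cdot N\cdot\varphi$ — are untouched by the hypothesis and persist as they stand, producing exactly the claimed formula.

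There is essentially no obstacle: the corollary is a one-line specialization, and the only thing that needs to be verified is the consistency of notation, namely that the "$H$" appearing in the ambient theorem is the same mean curvature whose vanishing is used to define minimality. Since both are defined by the trace $\tfrac{1}{n}\sum h(e_i,e_i)$ of the second fundamental form $h$ along an orthonormal frame of the screen distribution (with the null direction $s_0$ contributing no trace term, as the bilinear form $h$ only couples tangential vectors to the transverse null direction in the Gauss formula used above), no further computation is required.
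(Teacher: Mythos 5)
Your proposal is correct and matches the paper's (implicit) argument exactly: the corollary is obtained from the preceding theorem simply by imposing the minimality condition $H\equiv 0$, which kills the term $HN\cdot\varphi$ and leaves the remaining four terms unchanged. The paper offers no separate proof for this corollary, so your one-line specialization is precisely what is intended.
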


\begin{example}
Let $\left(\mathbb{R}^{3,1},\widetilde{g}\right) $ be the Minkowski space with signature  $\left( +,+,+,-\right) $ of the canonical basis $\left( \partial_{1},\partial _{2},\partial _{3},\partial _{4}\right) $. $\left( M,g\right) $
is the lightlike hypersurface given by 
\begin{equation*}
M=\left\{ \left( -x,y-z,-y-z,-x\right) \in \mathbb{R}_{1}^{4}:x,y,z\in \mathbb{R} \right\} 
\end{equation*}
Then, $Rad \ TM$ and $ltr\left( TM\right) $ are defined by 
\begin{align*}
Rad \ TM &=Sp\left\{ s_{0}=-\partial _{0}-\partial _{3}\right\}  \\
ltr\left( TM\right)  &=Sp\left\{ N=-\partial _{0}+\partial _{3}\right\} 
\end{align*}
So, the screen distribution $S\left( TM\right) $ is spanned by
\begin{equation*}
s_{1}=\partial _{1}-\partial _{2},s_{2}=-\partial _{1}-\partial _{2}
\end{equation*}
In this situation, we obtain the vector fields $N,s_{0},s_{1},s_{2}$
satisfying the following conditions.
\begin{equation*}
g\left( s_{0},N\right) =1,g\left( s_{0},s_{i}\right) =g\left( N,s_{i}\right)
=0,i=1,2
\end{equation*}
Then, we obtain that for $i,j=0,1,2$, 
\begin{align*}
h\left( s_{i},s_{j}\right) =\widetilde{g}\left( \widetilde{\nabla }%
_{s_{i}}s_{j},s_{0}\right) =0
\end{align*}
Thus, relation between the spinorial covariant derivatives
\begin{equation*}
\widetilde{\nabla }_{s_{i}}^{s}\varphi =\nabla _{s_{i}}^{s}\varphi +\dfrac{1%
}{2}\displaystyle\sum_{j=1}^{2}h\left( s_{i},s_{j}\right) s_{j}\cdot N\cdot
\varphi \Longrightarrow \widetilde{\nabla }_{s_{i}}^{s}\varphi =\nabla
_{s_{i}}^{s}\varphi 
\end{equation*}
for $\varphi \in \Gamma \left( SM\right) $ and $s_{i},i=0,1,2$.
\end{example}

\textbf{\bigskip .}

\end{document}